\newtheorem{Theorem}{Theorem}[section]
\newtheorem{Definition}{Definition}[section]
\newtheorem{Proposition}{Proposition}[section]
\newtheorem{Lemma}{Lemma}[section]
\newtheorem{Conjecture}{Conjecture}[section]
\newenvironment{proof}{{\textbf{Proof.}}\,}{\hfill$\hbox{\rule{5pt}{5pt}}$\\}
\newcommand{\Section}[1]{
        \par
        \stepcounter{section}
        \settowidth{\hangindent}{\large\bf\thesection.~}
        \hangafter=1
        \bigskip\bigskip\noindent
        {\large\bf\hbox{\thesection.~}#1}\par
        \nopagebreak
        \medskip
}
\newcommand{\alglist}{
\begin{list}{Step 1}
{\setlength{\leftmargin}{1.1 in}\setlength{\labelwidth}{1.0 in}} }
\begin{document}
\title{Computing all Laplacian H-eigenvalues for a $k$-uniform loose path of length three\thanks{This
work was supported by the National Natural Science Foundation of
China (Grant No. 11771244).}}

\author{Junjie Yue  \quad Liping Zhang \thanks
{Corresponding author (lzhang@math.tsinghua.edu.cn).} \\
{\small  Department of Mathematical Sciences, Tsinghua University, Beijing 100084, China}}

\date{}
\maketitle

{\bf Abstract.} The spectral theory of Laplacian tensor  is an important tool for
revealing some important properties of a hypergraph. It is meaningful to compute all Laplacian H-eigenvalues for some special $k$-uniform hypergraphs. For an odd-uniform loose path of length three, the Laplacian H-spectrum  has been studied. However, all Laplacian H-eigenvalues of the class of loose paths have not been found out.  In this paper, we compute all Laplacian H-eigenvalues for the class of loose paths. We show that the number of Laplacian H-eigenvalues of an odd(even)-uniform loose path with length three is $7$($14$). Some numerical results are given to show the efficiency of our method.  Especially, the numerical results show that its Laplacian H-spectrum  converges to $\{0,1,1.5,2\}$ when $k$ goes to infinity. Finally, we establish convergence analysis for a part of the conclusion and also present a conjecture.

 \vspace{2mm}

{\bf Key words.} H-eigenvalue, tensor, hypergraph, Laplacian, loose path.

{\bf AMS subject classifications. }  05C65, 15A18, 15A69

\vspace{2mm}


\Section{Introduction}

A natural definition for the Laplacian tensor and the signless Laplacian tensor of a $k$-uniform hypergraph for $k\ge 3$ was introduced in \cite{eig3}. From then on,
the study of spectral hypergraph theory via tensors has
attracted extensive attention and interest \cite{Hyp1,CPZ1,Hu1,Hyp2,Hyp3,LiG,weak2,eig2,eig3,qibook,eig23,xie1}. Some results extended
the classical results in spectral graph theory \cite{unf2}. Xie and  Chang \cite{xie1} discussed the applications of  the largest and the smallest H-eigenvalues of Laplacian tensors and signless Laplacian tensors in the edge cut and the edge connectivity of a $k$-uniform hypergraph. Qi \cite{eig3} introduced $H^+$-eigenvalues of Laplacian tensors and signless Laplacian tensors and established some  properties of these$H^+$-eigenvalues. In \cite{eig3}, Qi also defined  analytic connectivity of a uniform hypergraph and discussed its application in edge connectivity.  Hu, Qi and Xie \cite{Hyp2} studied the largest Laplacian and signless Laplacian eigenvalues of a $k$-uniform hypergraph, and generalized some classical results of spectral graph theory to spectral hypergraph theory, in particular, when $k$ is even. They showed that the largest Laplacian
$H$-eigenvalue of a connected $k$-uniform hypergraph  is equal to
its largest signless Laplacian $H$-eigenvalue if and only if it is
odd-bipartite \cite{Hyp2}. A $k$-uniform hypergraph is called odd-bipartite if $k$ is even, and the vertex set of the hypergraph can be divided to two parts, such that each edge has odd number of vertices in each of these two parts.

Recently, Qi, Shao and Wang \cite{eig23} shown that the largest
signless Laplacian $H$-eigenvalue of a connected $k$-uniform
hypergraph $G$, reaches its upper bound $2\Delta$, where $\Delta$ is
the largest degree of $G$, if and only if $G$ is regular, and that
the largest Laplacian $H$-eigenvalue of $G$, reaches the same upper
bound, if and only if $G$ is regular and odd-bipartite. What kind of
$k$-uniform hypergraph $G$ is regular and odd-bipartite? To answer
this question, they introduced
$s$-paths and $s$-cycles and studied their properties. Clearly, when
$s=1$, $s$-path extended the path in an ordinary graph, which is
called loose path \cite{CPZ1,loosepath}. They pointed out that $s$-path cannot
be regular but is odd-bipartite when $k\ge 4$
\cite[Proposition 4.1]{eig23}. Hu, Qi and Shao \cite{CPZ1} introduced the class of cored hypergraphs and power hypergraphs, and investigated the properties of their Laplacian H-eigenvalues. Power hypergraphs are cored hypergraphs, but not vice versa. They showed that loose paths are power hypergraphs, while $s$-paths and for $2\le s< \frac{k}{2}$ are cored hypergraphs, but not power hypergraphs in general. Moreover, it is shown that  the largest Laplacian H-eigenvalue of an even-uniform cored hypergraph is equal to its largest signless Laplacian H-eigenvalue. Especially, they found out  the Laplacian H-spectra of the $k$-uniform loose path of
length $3$ in \cite[Proposition 5.4]{CPZ1} when $k$ is odd.
 However, \cite[Proposition 5.4]{CPZ1} can not compute out all H-eigenvalues
of its Laplacian tensor. Very recently, Chang, Chen and Qi \cite{Changjy} proposed an efficient first-order optimization algorithm  for computing extreme
H- and Z-eigenvalues of sparse tensors arising from large scale uniform hypergraphs.

These results raised several questions. Firstly, can we identify the
largest Laplacian and signless Lapacian $H$-eigenvalues of
$k$-uniform loose paths? This question was studied by Yue, Zhang and Lu \cite{hyper2}. For $k$-uniform loose paths, they showed in \cite{hyper2} that the largest H-eigenvalues of their adjacency tensors, Laplacian tensors, and
signless Laplacian tensors are computable. Secondly, can we identify the
largest adjacency and signless Lapacian $H$-eigenvalues of
power hypergraphs and cored hypergraphs? This question was discussed by Yue, Zhang, Lu and Qi \cite{hyper1}. In \cite{hyper1},  they  studied the adjacency and signless Laplacian tensors of cored hypergraphs and power hypergraphs. They investigated the properties of their adjacency
and signless Laplacian H-eigenvalues. Especially, they found out the largest H-eigenvalues
of adjacency and signless Laplacian tensors for uniform squids. Moreover, they also computed the
H-spectra of sunflowers.  Thirdly, can we calculate all Laplacian H-eigenvalues for some special $k$-uniform hypergraphs,
such as loose paths and loose cycles? This is useful if one wishes to study the second smallest Laplacian H-eigenvalue  of
a $k$-uniform hypergraph, as the second smallest Laplacian eigenvalue of a graph plays a key role in spectral graph theory \cite{unf2,unf3,CPZ1}.


Motivated by the third question and the above mentioned applications.  In this paper, we pay our attention on
the Laplacian H-spectrum of $k$-uniform loose paths of length three. We observe that \cite[Proposition 5.4]{CPZ1} can not find out all  Laplacian H-eigenvalues
of the odd-uniform loose path of length three. There exists another case except four cases  in \cite[Proposition 5.4]{CPZ1}. In addition, there is no results when $k$ is even.  Using the same method as \cite{CPZ1}, we compute
the Laplacian H-spectrum of the loose path of length three. This will be useful for research on the second smallest Laplacian  H-eigenvalue of a $k$-uniform hypergraph, and also will be useful for research on applications in the edge cut and the edge connectivity of a $k$-uniform hypergraph. Especially, we show that when $k$ is odd, the number of Laplacian H-eigenvalues
is $7$, and when $k$ is even, the number of Laplacian H-eigenvalues is $14$. Some numerical results are given to show the efficiency of our method.  Especially, the numerical results show that its Laplacian H-spectrum  converges to $\{0,1,1.5,2\}$ when $k$ goes to infinity. Finally, we establish convergence analysis for a part of the conclusion and also present a conjecture.

The rest of this paper is organized as follows. We list some known
results of cored hypergraphs and power hypergraphs in the next
section. In Section 3, we investigate the number and distribution of
Laplacian H-eigenvalues of the $k$-uniform loose path with length three.
In Section 4, numerical experiments are implemented  and some convergence analyses are established. Finally, some concluding remarks are given in Section 5.

\Section{Preliminaries}
Some known results about cored hypergraphs and power hypergraphs are given in this section, which will be used in the sequel. Denote $[n]:=\{1,\ldots, n\}$.  A real tensor $\mathcal{T}=(t_{i_1 \cdots
i_k})$ of order $k$ and dimension $n$ refers to a multidimensional
array or a hypermatrix with entries $t_{i_1 \cdots i_k}$ such that
$t_{i_1 \cdots i_k}\in \mathbb{R}$ for all $i_j\in [n]$ and $j\in[k]$.
Given a vector $x\in \mathbb{R}^n$, $\mathcal{T} x^{k-1}$ is defined as an
n-dimensional vector such that its $i$th element being
$\sum\limits_{i_2,\cdots,i_k\in [n]} t_{ii_2 \cdots i_k} x_{i_2}
\cdots x_{i_k}$ for $i\in [n]$. Let $\mathcal{I}$ be the identity
tensor of appropriate dimension, e.g., $i_{i_1\cdots i_k}=1$ if and
only if $i_1=\cdots=i_k\in[n]$, and zero otherwise when the
dimension is $n$. The following definition was introduced in
\cite{eig2}.

\begin{Definition} \label{defeig}
{\rm Let $\mathcal{T}$ be a $k$-th order $n$-dimensional real tensor. For
some $\lambda\in\mathbb{R}$, if polynomial system $(\lambda
\mathcal{I}-\mathcal{T})x^{k-1}=0$ has a solution
$x\in\mathbb{R}^n\backslash \{0\}$, then $\lambda$ is called an
H-eigenvalue and $x$ an H-eigenvector.}
\end{Definition}

Obviously, H-eigenvalues are real number. The number of
H-eigenvalues of a real tensor is finite \cite{eig1,eig2}. By
\cite{eig3}, all the tensors given in Definition \ref{defLQ} have at
least one H-eigenvalue \cite{weak3}. Hence, we can denote by
$\lambda(\mathcal{T})$ as the largest H-eigenvalue of a real tensor
$\mathcal{T}$.

We next list some essential notions of uniform hypergraphs. Please refer to
\cite{unf1,unf2,unf3,GR1,weak1,eig3} for comprehensive references.
In this paper, unless stated otherwise, a hypergraph means an
undirected simple $k$-uniform hypergraph $G$ with vertex set $V$ and
edge set $E$. For a subset $S\subset[n]$, we denote by $E_S$ the set
of edges $\{e\in E| S\cap e\neq \emptyset\}$. For a vertex $i\in V$,
we simplify $E_{\{i\}}$ as $E_i$. It is the set of edges containing
the vertex $i$, i.e., $E_i:=\{e\in E|i\in e\}$. The cardinality
$|E_i|$ of the set $E_i$ is defined as the degree of the vertex $i$,
which is denoted by $d_i$. Two different vertices $i$ and $j$ are
connected to each other (or the pair $i$ and $j$ is connected), if
there is a sequence of edges $(e_1,\cdots,e_m)$ such that $i\in
e_1$, $j\in e_m$ and $e_r\cap e_{r+1}\neq \emptyset$ for all
$r\in[m-1]$. A hypergraph is called connected, if every pair of
different vertices of $G$ is connected. In the sequel, unless stated
otherwise, all the notations introduced above are reserved for the
specific meanings. For the sake of simplicity, we mainly consider
connected hypergraphs in the subsequent analysis because the
conclusion on connected hypergraphs can be easily generalized to
general hypergraphs via the techniques in \cite{weak1,eig3}.

Qi \cite{eig3} introduced the following definition for the
adjacency, Laplacian and signless Lapacian tensors of $k$-uniform
hypergraphs.

\begin{Definition} \label{defLQ}
{\rm Let $G=(V,E)$ be a $k$-uniform hypergraph. The adjacency tensor of
$G$ is defined as the $k$-th order $n$ dimensional tensor
$\mathcal{A}$ whose $(i_1\cdots i_k)$-entry is:
\begin{equation*}
a_{i_1\cdots i_k}:=
\begin{cases}
   \frac{1}{(k-1)!} &\mbox{if $\{i_1,\cdots,i_k\}\in E$ },\\
   0 &\mbox{otherwise. }
   \end{cases}
\end{equation*}
Let $\mathcal{D}$ be a $k$-th order $n$-dimensional diagonal tensor
with its diagonal element $d_{i\cdots i}$ being $d_i$, the degree of
vertex $i$, for all $i\in [n]$. Then
$\mathcal{L}:=\mathcal{D}-\mathcal{A}$ is the Laplacian tensor of
the hypergraph $G$, and $\mathcal{Q}:=\mathcal{D}+\mathcal{A}$ is
the signless Laplacian tensor of the hypergraph $G$.}
\end{Definition}

It is known that zero is always the smallest H-eigenvalue of
$\mathcal{L}$, $d\leq\lambda(\mathcal{L})\leq
\lambda(\mathcal{Q})\leq 2d$, where $d$ is the maximum degree of $G$
\cite{eig3}, and $\bar d\leq\lambda(\mathcal{A})\leq d$
\cite[Theorem 3.8]{Hyp1}, where $\bar d$ is the average degree of
$G$.

In the following, we recall the
concept of loose path introduced in
\cite{CPZ1,loosepath}. Some known results on $k$-uniform loose paths are listed here.

\begin{Definition} \label{def2.1}
{\rm Let $G=(V,E)$ be a $k$-uniform hypergraph. If we can number the vertex set $V$ as $V:=\{i_{1,1},\cdots,i_{1,k},i_{2,2},\cdots,i_{2,k},\cdots,i_{d-1,k},i_{d,2},\cdots,i_{d,k}\}$ for some positive integer $d$ such that $E=\{\{i_{1,1},\cdots,i_{1,k} \},\{i_{1,k},i_{2,2},\cdots,i_{2,k} \},\cdots,\{i_{d-1,k},i_{d,2},\cdots,i_{d,k}\}\}$, then $G$ is a loose path. $d$ is the length of the loose path.}
\end{Definition}
\begin{figure}[!htb]
\begin{center}
\includegraphics[height=0.4in,width=2.4in]{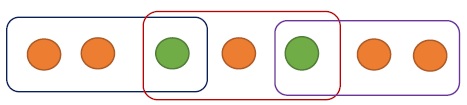}
\caption{\small An example of a $3$-uniform loose path of length $3$}
\label{fig2}
\end{center}
\end{figure}
\begin{figure}[!htb]
\begin{center}
\includegraphics[height=0.4in,width=3.2in]{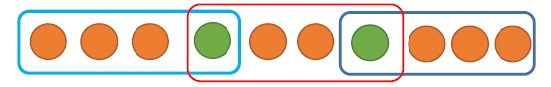}
\caption{\small An example of a $4$-uniform loose path of length $3$}
\label{fig3}
\end{center}
\end{figure}

 Let $G$ be a $k$-uniform loose path, we
have $\lambda(\mathcal{L})=2$ when $k$ is odd \cite{CPZ1,Hyp3}, and $2<\lambda(\mathcal{L})<3$ when $k$ is even \cite{hyper2}. In \cite[Proposition 5.4]{CPZ1}, the H-spectrum of Laplacian tensor was characterized for an odd-uniform loose path with length three. However, \cite[Proposition 5.4]{CPZ1} can not compute out all Laplacian H-eigenvalues.  Can we characterize the Laplacian H-spectrum for a $k$-uniform loose path of length three? In the next section, we will investigate this question.

The next two propositions follow from \cite[Corollary 1]{hyper2} and \cite[Proposition4.2, Corollary 5.1]{CPZ1} respectively.
\begin{Proposition}\label{largeL}
{\rm Let $G=(V,E)$ be a $k$-uniform loose path with length $d\ge 3$ and $\mathcal{L}$ be its Laplacian tensor. Then $\lambda(\mathcal{L})=2$ if $k$ is odd, and $2<\lambda(\mathcal{L})<3$ if $k$ is even.}
\end{Proposition}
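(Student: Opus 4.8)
The proposition is announced as a consequence of \cite[Corollary 1]{hyper2}; here is the route I would take to it. The lower estimate is immediate: a loose path $G$ of length $d\ge 3$ has maximum degree $\Delta=2$, attained precisely at the $d-1$ vertices $i_{r,k}$ ($1\le r\le d-1$) shared by consecutive edges, while every other vertex has degree $1$; hence the general inequalities recalled above give $2=\Delta\le\lambda(\mathcal L)\le\lambda(\mathcal Q)\le 2\Delta=4$. So the whole content is to sharpen the upper bound to $2$ when $k$ is odd and to $3$ when $k$ is even, and to upgrade the lower bound to the strict inequality $\lambda(\mathcal L)>2$ when $k$ is even.

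I would attack the upper bound by direct inspection of the system $\mathcal L x^{k-1}=\lambda x^{[k-1]}$, where $x^{[k-1]}:=(x_1^{k-1},\dots,x_n^{k-1})^{\top}$. It has only two kinds of equations: for a degree-$1$ vertex $w$ contained in the edge $e$,
\begin{equation*}
(1-\lambda)x_w^{k-1}=\prod_{u\in e\setminus\{w\}}x_u ,\qquad\text{equivalently}\qquad (1-\lambda)x_w^{k}=P_e:=\prod_{u\in e}x_u ,
\end{equation*}
and for the shared degree-$2$ vertex $v=e_r\cap e_{r+1}$,
\begin{equation*}
(2-\lambda)x_v^{k-1}=\prod_{u\in e_r\setminus\{v\}}x_u+\prod_{u\in e_{r+1}\setminus\{v\}}x_u .
\end{equation*}
The first relation shows that all degree-$1$ vertices of a fixed edge $e$ share the common $k$-th power $P_e/(1-\lambda)$; eliminating these coordinates collapses the system to the $d-1$ shared values together with one scalar per edge, so that, exactly as in the method of \cite{CPZ1}, $\lambda(\mathcal L)$ appears as the largest real root of an explicit low-degree polynomial $f_k(\lambda)$ depending on $k$ and $d$.

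For $k$ odd there is a cleaner route to $\lambda(\mathcal L)\le 2$. Normalize an H-eigenvector of $\lambda:=\lambda(\mathcal L)\ge 2$ so that $\max_v|x_v|=1$. If the maximum is attained at a degree-$1$ vertex $w$, its equation yields $\lambda-1=|1-\lambda|=|x_w|^{-(k-1)}\bigl|\prod_{u\in e\setminus\{w\}}x_u\bigr|\le 1$, hence $\lambda\le 2$. If the maximum is attained only at degree-$2$ vertices, one exploits that $k-1$ is even (so $x_w^{k-1}\ge 0$, which pins down the signs of the edge products) together with the common-$k$-th-power relation, to propagate the maximum modulus to a degree-$1$ vertex or to force $\lambda=2$ directly; combined with the lower bound this gives $\lambda(\mathcal L)=2$. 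For $k$ even this shortcut is unavailable, because $x_w^{k-1}$ is no longer sign-definite, so I would instead work with the reduced polynomial $f_k$: check by sign analysis that $f_k$ has a root in the open interval $(2,3)$ and no root $\ge 3$, so that its largest root, which equals $\lambda(\mathcal L)$, lies strictly between $2$ and $3$. The genuinely technical step, and the one I expect to be the main obstacle, is exactly this even-$k$ analysis --- without the modulus shortcut one must control the sign of $f_k$ on $[2,\infty)$ uniformly in $k$ --- which is the core of \cite[Corollary 1]{hyper2}, from which the proposition follows.
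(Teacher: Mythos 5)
You should first note that the paper itself contains no proof of Proposition \ref{largeL}: it is imported verbatim from \cite[Corollary 1]{hyper2} (see also \cite{CPZ1,Hyp3}), so your sketch has to stand on its own, and as written it has a genuine gap precisely where the content lies. Your lower bound $2=\Delta\le\lambda(\mathcal L)$ and your Case A estimate are fine, but Case B of the odd-$k$ argument is not. Write $a_r$ for the entry at the $r$-th shared vertex and $y_r$ for the common entry at the degree-one vertices of the $r$-th edge (your ``common $k$-th power'' observation). If $\lambda>2$, the degree-one equations give $|y_1|=|a_1|/(\lambda-1)$, $|y_d|=|a_{d-1}|/(\lambda-1)$ and $|y_r|\le M/\sqrt{\lambda-1}$ for the middle edges, where $M=\max_r|a_r|$; hence every degree-one entry is strictly smaller in modulus than $M$, so for $\lambda>2$ the maximum always sits at a degree-two vertex and your ``clean'' case is vacuous. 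In the remaining case the modulus estimate only yields $\lambda-2\le 2(\lambda-1)^{-(k-2)/2}$, which does not exclude eigenvalues slightly above $2$; no ``propagation of the maximum modulus to a degree-one vertex'' is possible, because those entries are provably smaller. What actually closes the odd case is sign bookkeeping rather than modulus bookkeeping: when the relevant entries are nonzero, multiplying the shared-vertex equation by $a_r$ and substituting the degree-one relations gives $(\lambda-2)a_r^k=(\lambda-1)(y_r^k+y_{r+1}^k)$, while $(1-\lambda)y_r^2=a_{r-1}a_r$ forces consecutive $a_r$ to alternate in sign and $y_1,y_d$ to have signs opposite to $a_1,a_{d-1}$; chaining these constraints along the path gives a contradiction at the last shared vertex, and the eigenvectors with zero entries must then be treated separately. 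That is exactly the case-by-case eigenvector analysis of \cite{CPZ1,hyper2}, and it is absent from your sketch.

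A second gap is your claim that $\lambda(\mathcal L)$ ``appears as the largest real root of an explicit low-degree polynomial $f_k$''. The collapse of the eigenvalue system depends on the zero and sign pattern of the eigenvector; already for $d=3$ the paper's Theorems \ref{l1em2.1} and \ref{l1em2.3} produce five, respectively seven, distinct families of equations, and for odd $k$ the extreme eigenvalue $\lambda=2$ arises from a degenerate pattern (an eigenvector supported at a single degree-two vertex), not from the all-nonzero polynomial. For even $k$, identifying $\lambda(\mathcal L)$ with the largest root of the all-nonzero reduction needs an argument you do not supply --- for instance odd-bipartiteness of the loose path, which gives $\lambda(\mathcal L)=\lambda(\mathcal Q)$ by \cite{Hyp2}, combined with the positive Perron eigenvector of $\mathcal Q$ --- and the root location in $(2,3)$, including the strict inequality $\lambda(\mathcal L)>2$, is then precisely the content of \cite[Corollary 1]{hyper2}, which you (like the paper) end up citing rather than proving. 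So the portion you actually argue is the easy portion; the two substantive claims are respectively argued by a method that cannot work (odd $k$, Case B) and deferred to the reference (even $k$).
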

\begin{Proposition}\label{lambda1}{\rm
Let $G=(V,E)$ be a $k$-uniform loose path with length $d\ge 3$ and $\mathcal{L}$ be its Laplacian tensor. Then $\lambda=1$ is an H-eigenvalue of $\mathcal{L}$.}
\end{Proposition}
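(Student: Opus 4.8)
The plan is to exhibit an explicit H-eigenvector $x$ for the eigenvalue $\lambda = 1$ directly from the eigenvalue equation $\mathcal{L}x^{k-1} = x^{[k-1]}$, i.e. for each vertex $v$,
\begin{equation*}
d_v x_v^{k-1} - \sum_{e\in E_v}\ \prod_{u\in e\setminus\{v\}} x_u \;=\; x_v^{k-1}.
\end{equation*}
First I would fix the vertex labelling of Definition \ref{def2.1} for a loose path of length $d\ge 3$: the edges are $e_1=\{i_{1,1},\dots,i_{1,k}\}$, $e_2=\{i_{1,k},i_{2,2},\dots,i_{2,k}\}$, and so on, so that the only vertices of degree $2$ are the ``connector'' vertices $i_{1,k}, i_{2,k},\dots,i_{d-1,k}$ shared by consecutive edges, while all remaining ``interior'' vertices have degree $1$. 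The natural guess is to support $x$ on a single edge, say the first edge $e_1$: set $x_u = 0$ for every vertex $u\notin e_1$, and choose the values on $e_1$ so that the equation holds both at the degree-$1$ vertices of $e_1$ and at the single connector vertex $i_{1,k}\in e_1\cap e_2$.

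The key steps, in order, would be: (1) For a vertex $u\in e_1$ with $\deg u = 1$, the equation reads $x_u^{k-1} - \prod_{w\in e_1\setminus\{u\}} x_w = x_u^{k-1}$, i.e. $\prod_{w\in e_1\setminus\{u\}} x_w = 0$; so it suffices that at least two coordinates of $x$ on $e_1$ vanish. (2) For the connector vertex $v=i_{1,k}$, which has $\deg v = 2$ and lies in $e_1$ and $e_2$, the equation reads $2x_v^{k-1} - \prod_{w\in e_1\setminus\{v\}} x_w - \prod_{w\in e_2\setminus\{v\}} x_w = x_v^{k-1}$; since all vertices of $e_2\setminus\{v\}$ lie outside $e_1$ they are assigned $0$, so the second product vanishes, and we need $x_v^{k-1} = \prod_{w\in e_1\setminus\{v\}} x_w$. (3) For every vertex $u\notin e_1$, the left-hand side involves only products over edges containing $u$; each such edge has at least one vertex (in fact $k-1\ge 2$ of them) outside $e_1$ assigned $0$, except possibly the connector $i_{1,k}$, but the edge $e_2$ also contains the interior vertices $i_{2,2},\dots,i_{2,k}$ which are zero, so all these equations reduce to $0=0$. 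Thus the only real constraints are $x_v^{k-1} = \prod_{w\in e_1\setminus\{v\}} x_w$ at $v=i_{1,k}$ together with at least two zero coordinates on $e_1$. Concretely, choosing $x_v = x_{i_{1,k}} = 1$, exactly one other vertex of $e_1$ equal to $1$, and the remaining $k-2$ vertices of $e_1$ equal to $0$, makes the product equal $1$, satisfies all equations, and gives a nonzero real vector; hence $\lambda=1$ is an H-eigenvalue.

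I expect the main obstacle to be purely bookkeeping rather than conceptual: one must verify carefully that with this sparse choice of $x$ \emph{every} vertex equation is satisfied — in particular the equations at vertices of the neighbouring edge $e_2$ and the equation at the interior vertices of $e_1$ that are set to zero (where $x_u^{k-1}=0$ on the right and one needs the corresponding product on the left to vanish too, which it does because $k\ge 3$ forces $e_1$ to retain a zero coordinate other than $u$ as long as at least two zeros were placed). A secondary point worth a remark is that this construction uses only the local structure of a loose path near one end edge and works verbatim for all $k\ge 3$, whether $k$ is odd or even, and for every length $d\ge 3$; it also does not conflict with Proposition \ref{largeL} since $1 < 2 \le \lambda(\mathcal{L})$. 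If one instead wished to follow \cite{CPZ1} more closely, the alternative is to invoke the reduction from the power/cored-hypergraph structure that relates Laplacian H-eigenvalues of the loose path to those of its ``base'' graph (an ordinary path), where $1$ is a Laplacian eigenvalue; but the direct verification above is self-contained and shorter.
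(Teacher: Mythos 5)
There is a genuine gap: the explicit vector you exhibit is not an H-eigenvector, and the failure is exactly at the connector vertex $v=i_{1,k}$. The product $\prod_{w\in e_1\setminus\{v\}}x_w$ runs over \emph{all} $k-1$ remaining vertices of $e_1$, including the $k-2$ coordinates you set to zero; with your choice it therefore equals $1\cdot 0^{k-2}=0$, not $1$ (for every $k\ge 3$), so the equation at $v$ becomes $2x_v^{k-1}-0-0=x_v^{k-1}$, forcing $x_v=0$ and contradicting $x_v=1$. (For $k=3$ the equation at the unique zero vertex $u$ of $e_1$ also fails, since then $e_1\setminus\{u\}$ consists of the two coordinates equal to $1$ and the required product is $1\neq0$.) In fact your steps (1) and (2) are mutually incompatible whenever $x_v\neq0$: step (1) needs a zero coordinate in $e_1\setminus\{u\}$ for every degree-one vertex $u$ of $e_1$, while step (2) with $x_v\neq0$ needs every coordinate of $e_1\setminus\{v\}$ to be nonzero. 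So any eigenvector supported on $e_1$ must have $x_v=0$ at the connector, and your concrete assignment can never work.

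The good news is that your overall strategy (a sparse eigenvector supported on one end edge, zeros elsewhere) is sound and the fix is immediate: take $x_{i_{1,1}}=1$ at a single degree-one vertex of $e_1$ (not the connector) and $x_u=0$ for every other vertex. At $i_{1,1}$ the equation for $\lambda=1$ reads $x_{i_{1,1}}^{k-1}-\prod_{w\in e_1\setminus\{i_{1,1}\}}x_w=x_{i_{1,1}}^{k-1}$, and the product vanishes because it contains $k-1\ge2$ zero coordinates; at every other vertex both sides are $0$, since each product over an edge minus one vertex still contains at least one zero factor (this is where $k\ge3$ enters), including at the connectors, where the $e_2$-products are identically zero. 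This corrected construction is self-contained, works for all $k\ge3$, odd or even, and for every length $d\ge3$. Note also that the paper does not prove this proposition directly but quotes it from \cite{CPZ1} (via cored/power hypergraph results), so a direct verification of this kind is a legitimately different and more elementary route --- but as written, your vector fails the eigenvalue equation at $i_{1,k}$ and the proof does not go through without the above repair.
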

The following proposition was given in \cite[Proposition 5.4]{CPZ1}.
\begin{Proposition}\label{pro5.4}{\rm
Let $k$ be odd and $G=(V,E)$ be a $k$-uniform loose path with length $d=3$. Let $\mathcal{L}$ be its Laplacian tensor.
Then $\lambda\neq 1$ is an H-eigenvalue of $\mathcal{L}$ if and only if one of the following four cases happens:

(i)$\lambda =2$ or $\lambda=0$,

(ii)$\lambda$ is the unique root of the equation $(\lambda-2)(1-\lambda)^{k-1}+1=0$, which is in $(0,1)$,

(iii)$\lambda$ is the unique root of the equation $(\lambda-2)^2 (1-\lambda)^{k-2}-1=0$, which is in $(0,1)$,

(iv)$\lambda$ is a real root of the equation $(\lambda-2)^2(1-\lambda)^{k-1}+ 2 \lambda -3=0$ in $(0,2)$.}
\end{Proposition}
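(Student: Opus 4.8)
The plan is to work directly with the eigenvalue system $\mathcal{L}x^{k-1}=\lambda x^{[k-1]}$, exploiting the very rigid structure of a length‑three loose path. Write the three edges as $e_1,e_2,e_3$; the vertex shared by $e_1$ and $e_2$ and the vertex shared by $e_2$ and $e_3$ have degree $2$, every other vertex has degree $1$, and I denote the values of these two connecting vertices by $p$ and $q$. For a degree‑one vertex $i$ lying in an edge $e$ the equation reads $(1-\lambda)x_i^{k-1}=\prod_{v\in e\setminus\{i\}}x_v$, so multiplying by $x_i$ gives $(1-\lambda)x_i^{k}=\prod_{v\in e}x_v$. Since $k$ is odd, $t\mapsto t^{k}$ is a bijection of $\mathbb{R}$, hence for $\lambda\neq 1$ (this is exactly why the statement excludes $\lambda=1$) all degree‑one vertices inside a common edge must carry the same value; call these $a$ (in $e_1$), $b$ (in $e_2$), $c$ (in $e_3$). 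This reduces the whole system to the five scalars $a,b,c,p,q$ subject to the three edge relations coming from $(1-\lambda)x_i^{k}=\prod_{v\in e}x_v$ and the two degree‑two relations $(2-\lambda)p^{k-1}=a^{k-1}+b^{k-2}q$ and $(2-\lambda)q^{k-1}=b^{k-2}p+c^{k-1}$.

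I would then run a finite case analysis according to which of $a,b,c$ vanish, using that $a\neq 0$ forces $p=(1-\lambda)a\neq 0$, $c\neq 0$ forces $q=(1-\lambda)c\neq 0$, and $b\neq 0$ forces $pq=(1-\lambda)b^{2}\neq 0$. If $a=b=c=0$, the degree‑two relations give $\lambda=2$ (with $p,q$ essentially free) and the all‑ones vector gives $\lambda=0$, which is case (i). If exactly one end‑group vanishes, say $a=0$ with $b,c\neq 0$, then $p,q\neq 0$ and eliminating $p,q$ via the edge relations collapses everything to $(2-\lambda)^{2}(1-\lambda)^{k-1}+2\lambda-3=0$, i.e. case (iv) (the mirror situation $c=0$ gives the same equation). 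If both end‑groups vanish but $b\neq 0$, symmetry forces $p=q$ and one is left with $(2-\lambda)^{2}(1-\lambda)^{k-2}=1$, i.e. case (iii). If the middle group vanishes together with a connecting vertex, say $a=p=b=0$ with $c,q\neq 0$ (or, only when $k\ge 5$, $b=0$ with both ends nonzero), one gets $(2-\lambda)(1-\lambda)^{k-1}=1$, i.e. case (ii). For each of (ii)--(iv) the existence and the location of the root(s) in the claimed subinterval of $(0,2)$ would follow from the intermediate value theorem together with a short sign/monotonicity computation and the a priori bounds $0\le\lambda\le 2$ supplied by Proposition~\ref{largeL} and the fact that $0$ is the least H‑eigenvalue of $\mathcal{L}$.

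The step I expect to be the crux is the one remaining configuration: $a,b,c$ (and hence $p,q$) all nonzero. Substituting $p=(1-\lambda)a$, $q=(1-\lambda)c$ and $b^{2}=(1-\lambda)ac$ into the two degree‑two relations first forces $a^{k}=c^{k}$, hence $a=c$ and $p=q$, and then the relations collapse to the single condition
\[
\bigl[(2-\lambda)(1-\lambda)^{k-1}-1\bigr]^{2}=(1-\lambda)^{k}.
\]
This equation has $\lambda=0$ among its roots, but it also has a root in $(0,1)$ that is not a root of any of the equations in (ii)--(iv) (for $k=3$, say, this root is $\lambda\approx 0.415$), and one checks directly that it is an H‑eigenvalue. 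Hence the honest outcome of this plan is that the four alternatives listed in Proposition~\ref{pro5.4} are \emph{not} exhaustive: a fifth alternative, governed by the displayed equation, must be adjoined. Establishing that this extra case really occurs, that no further case is possible, and carrying out the analogous (and considerably longer) case analysis for even $k$ is exactly what the remainder of the paper is about.
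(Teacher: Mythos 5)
Your proposal is, in substance, the paper's own treatment of this material: the case analysis according to which groups of vertex values vanish is exactly the \cite{CPZ1}-style argument the paper runs in proving Theorem~\ref{l1em2.1}, and your conclusion that the four listed alternatives are not exhaustive---the configuration with all groups nonzero forcing $x_k=x_{2k-1}$ and the extra equation $[(\lambda-2)(1-\lambda)^{k-1}+1]^2-(1-\lambda)^k=0$ (the paper's case (v), identical to your displayed equation since $(\lambda-1)^{k-1}=(1-\lambda)^{k-1}$ for odd $k$)---is precisely the paper's stated point about Proposition~\ref{pro5.4}, which it quotes from \cite{CPZ1} only to correct it. So your blind attempt is correct and takes essentially the same route as the paper; there is no gap to flag.
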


\Section{Laplacian H-spectrum of the loose path with length three }

In this section, we discuss Laplacian tensor of a $k$-uniform loose path with length $d=3$. We
compute all H-eigenvalues of its Laplacian tensor. Especially, we show that the number of Laplacian H-eigenvalues is $7$ when $k$ is odd, and $14$ when $k$ is even.

Let $G_{k,3}$ denote  a $k$-uniform loose path with length $d=3$ throughout the rest of this paper. By Proposition \ref{lambda1},  $\lambda=1$ is  a Laplacian H-eigenvalue of $G_{k,3}$ \cite[Corollary 5.1]{CPZ1}. Let $\lambda$ be an H-eigenvalue of Laplacian tensor of $G_{k,3}$. By Proposition \ref{pro5.4}, when $k$ is odd, if $\lambda\neq 1$ then $\lambda$ lies in one of the four cases \cite[Proposition 5.4]{CPZ1}. Actually, we prove that there are five cases and the number of Laplacian H-eigenvalues is $7$. Our results are given in the following theorem.

\begin{Theorem}\label{l1em2.1}{\rm
Let $k$ be odd and $\mathcal{L}$ be  Laplacian tensor of $G_{k,3}$.
Then $\lambda\neq 1$ is an H-eigenvalue of $\mathcal{L}$ if and only if one of the following five cases happens:

(i) $\lambda =2$ or $\lambda=0$,

(ii) $\lambda$ is the unique root of the equation $(\lambda-2)(1-\lambda)^{k-1}+1=0$, which is in $(0,1)$,

(iii) $\lambda$ is the unique root of the equation $(\lambda-2)^2 (1-\lambda)^{k-2}-1=0$, which is in $(0,1)$,

(iv) $\lambda$ is a real root of the equation $(\lambda-2)^2(1-\lambda)^{k-1}+ 2 \lambda -3=0$. There are two real roots, one is in $(0,1)$ and the other is in $(1,\frac{2k}{k+1})$,

(v) $\lambda$ is a real root of the equation $[(\lambda-2)(\lambda-1)^{k-1}+1]^2-(1-\lambda)^k=0$, which is in $(0,1)$.}
\end{Theorem}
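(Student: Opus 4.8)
The plan is to set up the eigenvalue equation $(\lambda\mathcal{I}-\mathcal{L})x^{k-1}=0$ explicitly for the loose path $G_{k,3}$ and analyze it case by case according to which coordinates of the H-eigenvector $x$ vanish. Recall $G_{k,3}$ has three edges $e_1,e_2,e_3$ sharing single vertices: $e_1\cap e_2=\{u\}$, $e_2\cap e_3=\{v\}$, with $u$ and $v$ of degree $2$ and all other vertices of degree $1$. Writing out the component equations, for a degree-$1$ vertex $i$ lying only in edge $e$ we get $\lambda x_i^{k-1}=\prod_{j\in e, j\ne i}x_j$, and for $u$ (resp. $v$) we get $(\lambda-2)x_u^{k-1}=\prod_{j\in e_1,j\ne u}x_j+\prod_{j\in e_2,j\ne u}x_j$ (similarly for $v$). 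Following the method of \cite{CPZ1}, I would exploit the fact that the "pendant" vertices of a common edge can be assumed (up to scaling and the $k$-uniform structure) to share a common value, and that the key unknowns reduce to a handful of scalars $x_u,x_v$ and the products over the three edges. The bulk of the work is the bookkeeping of which coordinates are zero.

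The natural case split is: (a) $x_u=x_v=0$; (b) exactly one of $x_u,x_v$ is zero; (c) both nonzero. When $x_u=x_v=0$, each edge equation decouples; a nonzero product over an edge forces, via the degree-$1$ equations, an equation of the form $\lambda^{?}=(\text{product of }(k-1)\text{st roots})$, leading to $\lambda=0$ (all products zero, giving the zero eigenvalue with a nonzero pendant vector) — this yields case (i). When exactly one shared vertex vanishes, say $x_v=0$ but $x_u\ne 0$: edge $e_3$ decouples and edges $e_1,e_2$ interact only through $u$; eliminating the pendant variables in $e_1$ and in the $e_2$-pendants other than $v$, and using $x_v=0$ to kill the $e_2$ product, one is led to $(\lambda-2)(1-\lambda)^{k-1}+1=0$, i.e. case (ii); a parallel sub-case where a whole edge's pendants vanish but the product through two edges survives gives $(\lambda-2)^2(1-\lambda)^{k-2}-1=0$, case (iii). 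Finally, when $x_u,x_v$ are both nonzero, the three edge-products $p_1,p_2,p_3$ are linked; substituting the pendant relations $x_i=((1/\lambda)p_e)^{1/(k-1)}$ back into the equations for $u$ and $v$ and for the two outer pendant-groups produces a closed system in $(\lambda,p_1,p_2,p_3)$ whose resultant is (depending on sign/root-of-unity choices, since $k$ is odd so $(k-1)$st roots can be real) either $(\lambda-2)^2(1-\lambda)^{k-1}+2\lambda-3=0$ (case (iv)) or the new equation $[(\lambda-2)(\lambda-1)^{k-1}+1]^2-(1-\lambda)^k=0$ (case (v)). The point the paper stresses is that \cite[Prop.~5.4]{CPZ1} missed the branch giving (v); I would locate exactly the sign choice (coming from $(1-\lambda)^{k}$ versus $(\lambda-1)^k$ when extracting a $(k-1)$st root in the cross-edge elimination) that produces it.

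After deriving the five defining equations, the second half of the proof is the \emph{counting/localization} of their real roots, which is what upgrades \cite{CPZ1}'s four "cases" to a precise count of $7$ eigenvalues. For (ii) and (iii) I would show the relevant function is strictly monotone on $(0,1)$ with a sign change at the endpoints (using $k$ odd so $(1-\lambda)^{k-1}>0$ there), giving exactly one root each. For (iv), the function $f(\lambda)=(\lambda-2)^2(1-\lambda)^{k-1}+2\lambda-3$ satisfies $f(0)=1>0$, $f(1)=-1<0$, $f(2k/(k+1))$: here I'd plug in $\lambda=2k/(k+1)$, where $\lambda-2=-2/(k+1)$ and $1-\lambda=(1-k)/(k+1)$, and check the sign is positive (note $(1-\lambda)^{k-1}=((k-1)/(k+1))^{k-1}>0$ since $k-1$ even), so there is a root in $(0,1)$ and one in $(1,2k/(k+1))$; I'd also argue there are no further real roots, e.g. via a derivative/convexity argument or by bounding $\lambda<3$ since eigenvalues lie in $[0,2]$ for odd $k$ (Proposition \ref{largeL} gives $\lambda(\mathcal L)=2$). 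For (v) one shows exactly one root in $(0,1)$ by the same endpoint-sign-plus-monotonicity method. Then I'd verify the five sets of roots are pairwise distinct (the easiest is to note they live in disjoint or overlapping open intervals and check the overlapping ones, $0<1$ cases, are not common roots by substituting), and tally: $\{0,2\}$ (2) $+$ one from (ii) $+$ one from (iii) $+$ two from (iv) $+$ one from (v) $= 7$, consistent with the theorem.

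The main obstacle I anticipate is the case (c) elimination that produces (iv) and (v) simultaneously: keeping track of all the $(k-1)$st-root branch choices for the pendant variables without either collapsing (v) back into (iv) or generating spurious equations requires care — in particular one must argue that exactly these two polynomial equations arise and that the "missing fifth case" is genuinely new and not a root of the case-(iv) polynomial. A secondary technical point is the sharp root count for case (iv): establishing that $(\lambda-2)^2(1-\lambda)^{k-1}+2\lambda-3=0$ has \emph{exactly} two real roots (and pinning the second one to $(1,\tfrac{2k}{k+1})$) needs a genuine monotonicity/convexity analysis rather than mere endpoint evaluation, since the factor $(1-\lambda)^{k-1}$ changes behavior across $\lambda=1$. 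Everything else is the routine but lengthy substitution bookkeeping that \cite{CPZ1} already pioneered.
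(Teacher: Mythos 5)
Your overall strategy (case analysis on vanishing coordinates, elimination via the pendant-vertex relations, then root localization) is indeed the paper's route, but the sketch has concrete defects at exactly the points where the theorem's novelty lies. First, your component equations are wrong: for the Laplacian $\mathcal{L}=\mathcal{D}-\mathcal{A}$, a degree-one vertex $i$ in edge $e$ satisfies $(\lambda-1)x_i^{k-1}=-\prod_{j\in e\setminus\{i\}}x_j$, not $\lambda x_i^{k-1}=\prod_{j\in e\setminus\{i\}}x_j$, and accordingly the pendant substitution is $x_i=x_k/(1-\lambda)$ (resp. $x_{k+1}=\cdots=x_{2k-2}=\pm\sqrt{x_kx_{2k-1}/(1-\lambda)}$, a \emph{square} root, not a $(k-1)$st root), rather than $x_i=((1/\lambda)p_e)^{1/(k-1)}$; followed literally, your elimination would not reproduce the stated polynomials, whose $(1-\lambda)$-structure comes from the correct relations. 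Second, your case tree mislocates where (iii), (iv) and (v) come from. If one of $x_u,x_v$ vanishes (your case (b)), the lemmas of \cite{CPZ1} force the whole middle pendant block to vanish as well, so that branch only yields $\lambda=2$ or case (ii); case (iii) needs both $x_u,x_v\neq0$ with the two outer pendant blocks zero. More importantly, the (iv)-versus-(v) dichotomy is \emph{not} a sign/root-of-unity branch choice inside the all-nonzero configuration: (iv) arises when $x_u,x_v\neq0$, the middle block is nonzero, and exactly one outer pendant block vanishes, while (v) arises when every coordinate is nonzero, where one first shows $\lambda<1$ and then that $x_k=x_{2k-1}$ is forced (the alternative $x_k\neq x_{2k-1}$ leads to a contradiction with the equation of case (ii)), giving $[(\lambda-2)(1-\lambda)^{k-1}+1]^2=(1-\lambda)^k$. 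Since identifying this configuration is precisely the "missing fifth case" the theorem adds to \cite[Proposition 5.4]{CPZ1}, and you explicitly flag it as unresolved, this is a genuine gap rather than bookkeeping.

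The root localization for (v) is also not the routine "endpoint sign plus monotonicity" you propose: with $F(\lambda)=[(\lambda-2)(1-\lambda)^{k-1}+1]^2-(1-\lambda)^k$ one has $F(0)=0$, $F(t)=-(1-t)^k<0$ at the root $t$ of case (ii), and $F(1)=1>0$, so $F$ is not monotone on $(0,1)$ and the endpoint signs alone give nothing; the real work is to show $F$ has no zero in $(0,t)$ and exactly one in $(t,1)$ (the paper does this by exploiting that $(\lambda-2)(1-\lambda)^{k-1}+1$ is increasing for $\lambda<1$, so $F+( 1-\lambda)^k$ is first decreasing then increasing, together with an auxiliary first-order-condition argument on $(0,t)$). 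Your treatment of (iv) is fine in outline (the endpoint values and the sign at $\lambda=2k/(k+1)$ are correct, and a convexity argument pins uniqueness in $(0,1)$ and in $(1,2k/(k+1))$, matching the paper), but as it stands the proposal would need the corrected eigen-equations, the correct vanishing-pattern analysis producing (iv) and (v), and a genuine argument excluding a second root of (v) in $(0,t)$ before it constitutes a proof.
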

\begin{proof}
Suppose that $E=\{\{1,\cdots,k\},\{k,\cdots,2k-1\},\{2k-1,\cdots,3k-2\}\}$, and $x\in\Re^n$ be an H-eigenvector of $\mathcal{L}$ corresponding to the H-eigenvalue $\lambda \neq 1$. By \cite[Lemma 5.2]{CPZ1}, $\lambda\ge 0$.
By \cite[Lemmas 3.1 and 4.1]{CPZ1}, we have $x_1=\cdots=x_{k-1}=\frac{x_k}{1-\lambda}$ if there are nonzero; $x_{k+1}=\cdots=x_{2k-2}=\pm\sqrt{\frac{x_kx_{2k-1}}{1-\lambda}}$ if there are nonzero; and $x_{2k}=\cdots=x_{3k-2}= \frac{x_{2k-1}}{1-\lambda}$ if there are nonzero. By the analogous argument to the proof of \cite[Proposition 5.4]{CPZ1}, we can show (i), (ii) and (iii).

To show (iv) and (v),  according to the proof process of \cite[Proposition 5.4]{CPZ1}, we consider the case of $x_k\neq0$ and $x_{2k-1}\neq0$. We divide into two subcases to discuss as follows.

(a) If $x_{k+1}\neq0$, $x_1 \neq 0$ and $x_{2k} = 0$, we have
\begin{equation*}
\begin{cases}
   (\lambda-2) x^{k-1}_{k}=- (\pm \sqrt{\frac{x_k x_{2k-1}}{1-\lambda}})^{k-2} x_{2k-1},  \\
   (\lambda-2) x^{k-1}_{2k-1}=- (\pm \sqrt{\frac{x_k x_{2k-1}}{1-\lambda}})^{k-2} x_{k}-(\frac{x_{2k-1}}{1-\lambda})^{k-1}.
   \end{cases}
\end{equation*}
Let $t:=\frac{x_k}{x_{2k-1}}$. Then we obtain
\begin{equation*}
\begin{cases}
   (\lambda-2) (1-\lambda)^{k-1} t^k=(\lambda-2)(1-\lambda)^{k-1}+1,  \\
   (\lambda-2)^2 (1-\lambda)^{k-2} t^k=1.
   \end{cases}
\end{equation*}
The first equality shows $\lambda<2$. So, the above equation yields  $(\lambda-2)^2 (1-\lambda)^{k-1}+2\lambda -3=0$.
Let $$f(\lambda)=(\lambda-2)^2 (1-\lambda)^{k-1} -3,\quad g(\lambda)=-2\lambda.$$ Then $\lambda$ is a root of the equation $f(\lambda)-g(\lambda)=0$. We compute
$$f'(\lambda)=(\lambda-2)(\lambda-1)^{k-2}[(1+k)\lambda-2k]$$ and $$f''(\lambda)=(\lambda-1)^{k-3}[(k^2+k)\lambda^2 -4k^2 \lambda +4k^2-4k+2].$$
Note that $f'(\lambda)<0$ if $\frac{2k}{1+k}<\lambda<2$ or $0<\lambda<1$;  $f'(\lambda)>0$ if  $1<\lambda<\frac{2k}{1+k}$. It is easy to see that
$$f(0)=1,\quad f(1)=-3,\quad f(2)=-3,\quad f(\frac{2k}{1+k})=\frac{4}{(1+k)^2}\left(\frac{1-k}{1+k}\right)^{k-1}-3>-3,$$
and $$g(0)=0,\quad g(1)=-2,\quad g(2)=-4,\quad g(\frac{2k}{1+k})=-4+\frac{4}{k+1} \leq -3.$$
So, the equation $f(\lambda)-g(\lambda)=0$ has a root in $(0,1)$ and $(1,\frac{2k}{k+1})$, respectively. Clearly, there is a unique real root in $(1,\frac{2k}{k+1})$ since $f'(\lambda)-g'(\lambda)>0$ when $1<\lambda<\frac{2k}{k+1}$.
We now show that there is a unique root in $(0,1)$. Let $$h(\lambda)=(k^2+k)\lambda^2 -4k^2 \lambda +4k^2-4k+2.$$
 Then, $f''(\lambda)=(\lambda-1)^{k-3}h(\lambda)$. Since the minimum point of $h(\lambda)$ is  $\frac{2k}{k+1}=2-\frac{2}{k+1}>1$ and $h(1)=(k-2)(k-1))>0$, $f''(\lambda)> 0$ if $\lambda\in(0,1)$. Hence, $f'(\lambda)$ is monotone increasing in  $\lambda\in(0,1)$.  Combining $f'(0)=-4k<-2$ and $f'(1)=0>-2$, we obtain that the function $f(\lambda)-g(\lambda)$ is first decreasing and then increasing. It can be seen that  $f(0)-g(0)=1$ and $f(1)-g(1)=-1$. Thus, it has a unique root in $(0,1)$. Hence, in this subcase, we can show (iv).

 The discussion for the subcase  $x_{k+1}\neq0$, $x_1 = 0$ and $x_{2k} \neq 0$ is similar, and the result is the same as the above subcase.

(b) If $x_{k+1}\neq0$, $x_1 \neq 0$ and $x_{2k} \neq 0$, we have
\begin{equation}\label{3lodd4}
  \begin{cases}
    (\lambda-2) x^{k-1}_{k}=- (\pm \sqrt{\frac{x_k x_{2k-1}}{1-\lambda}})^{k-2} x_{2k-1}-(\frac{x_{k}}{1-\lambda})^{k-1},  \\
     (\lambda-2) x^{k-1}_{2k-1}=- (\pm \sqrt{\frac{x_k x_{2k-1}}{1-\lambda}})^{k-2} x_{k}-(\frac{x_{2k-1}}{1-\lambda})^{k-1}.
  \end{cases}
\end{equation}
Assume $\lambda>1$, we have $x_k x_{2k-1}<0$ and $(\lambda-2)(1-\lambda)^{k-1}+1=0$.  This is a contradiction with (ii). Thus, $\lambda<1$.

on one hand, if $x_k =x_{2k-1}$, by (\ref{3lodd4}), we have
\begin{equation*}
  [(\lambda-2)(1-\lambda)^{k-1}+1]^2-(1-\lambda)^k=0.
\end{equation*}
Let $$f(\lambda)=[(\lambda-2)(1-\lambda)^{k-1}+1]^2,\quad g(\lambda)=(1-\lambda)^k, \quad h(\lambda)=(\lambda-2)(1-\lambda)^{k-1}+1.$$ By a similar discussion as the proof of (ii) in \cite[Proposition 5.4]{CPZ1}, we know that $h(\lambda)$ is a monotonically  increasing function as $\lambda<1$. Hence, $f(\lambda)$ is first decreasing  and then increasing in $(0,1)$. Since $ g'(\lambda)=-k(\lambda-1)^{k-1}<0$ , $f(0)=1$ and $g(0)=1$, it has at least one root in $(t,1)$, where $t$ is the unique real root of  $(t-2)(1-t)^{k-1}+1=0$ in $(0,1)$.

Now we need to prove that if there is a real root in the interval $(0,t)$? The problem can be changed into the following minimization problem:
\begin{equation} \label{eq1}
\begin{array}{rcl}
 \min && T(\lambda)=(1-\lambda)^k-[(\lambda-2)(1-\lambda)^{k-1}+1]^2 \\
 {\rm s.t.} &&  -1\leq (\lambda-2)(1-\lambda)^{k-1}+1\leq 0.
\end{array}
\end{equation}
By the first order necessary condition of (\ref{eq1}), we have
\begin{equation}\label{mykkt}
\begin{cases}
   -k(1-\lambda)^{k-1}-2[(\lambda-2)(1-\lambda)^{k-1}+1](1-\lambda)^{k-2}(-k\lambda+2k-1)\\+(\mu_2-\mu_1)(1-\lambda)^{k-2}(-k\lambda+2k-1)=0,  \\\mu_1[(\lambda-2)(1-\lambda)^{k-1}+2] =0, \,\,
    \mu_1 \geq 0, \,\, (\lambda-2)(1-\lambda)^{k-1}+2\geq 0,\\
   \mu_2 [-(\lambda-2)(1-\lambda)^{k-1}-1] =0, \,\,
   \mu_2 \geq 0,\,\, -(\lambda-2)(1-\lambda)^{k-1}-1 \geq 0.
   \end{cases}
\end{equation}
By trivial analyzing and discussing (\ref{mykkt}) in details, we obtain that there is no a real root of $[(\lambda-2)(1-\lambda)^{k-1}+1]^2-(1-\lambda)^k=0$ in $(0,t)$.

On the other hand, if $x_k \neq x_{2k-1}$, we have $(\lambda-2)(1-\lambda)^{k-1}+1=0$. However, by the first equation of (\ref{3lodd4}),
$$(1-\lambda)^{k-1}(\pm \sqrt{\frac{x_k x_{2k-1}}{1-\lambda}})^{k-2} x_{2k-1}=0.$$ This is a contradiction. Thus, we complete the proof.
\end{proof}

We next  characterize all Laplacian H-eigenvalues of $G_{k,3}$ when $k$ is even. We compute out  all H-eigenvalues of its Laplacian tensor. Let $\lambda$ be an H-eigenvalue of Laplacian tensor of $G_{k,3}$ where $k$ is even. We show that if $\lambda\neq 1$ then $\lambda$ lies in one of seven positions, and the number of Laplacian H-eigenvalues is $14$.

\begin{Theorem}\label{l1em2.3}{\rm
Let $k$ be even and $\mathcal{L}$ be  Laplacian  tensor of $G_{k,3}$.
Then $\lambda\neq 1$ is an H-eigenvalue of $\mathcal{L}$ if and only if one of the following seven cases happens:

(i)$\lambda =0$ or $\lambda =2$.

(ii)$\lambda$ is a real root of the equation $(\lambda-2)(\lambda-1)^{k-1}-1=0$; there are two roots in $(2,3)$ and $(0,1)$, respectively.

(iii)$\lambda$ is a real root of the equation $(\lambda-2)^2 (\lambda-1)^{k-2}-1=0$; there are two roots in $(2,3)$ and $(0,1)$, respectively.

(iv)$\lambda$ is a real root of the equation $(\lambda-2)^2(\lambda-1)^{k-1}- 2 \lambda +3=0$; there are three roots in $(2,3)$, $(1,\frac{2k}{1+k})$ and $(0,1)$, respectively.

(v)$\lambda$ is the unique root of the equation $(\lambda-2)^2(1-\lambda)^{k-1}+1=0$, which is in $(2,3)$.

(vi)$\lambda$ is a real root of the equation $[(\lambda-2)(\lambda-1)^{k-1}-1]^2-(\lambda-1)^k=0$; there are two roots in $(2,3)$ and $(0,1)$, respectively.

(vii)$\lambda$ is the unique real root of the equation $[(\lambda-2)(\lambda-1)^{k-1}+1]^2-(1-\lambda)^k=0$, which is in $(2,3)$.}
\end{Theorem}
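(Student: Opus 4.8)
The plan is to follow the route of the proof of Theorem~\ref{l1em2.1} (and of \cite[Proposition~5.4]{CPZ1}), being careful about the parity‑dependent signs that behave differently when $k$ is even. Write $E=\{\{1,\dots,k\},\{k,\dots,2k-1\},\{2k-1,\dots,3k-2\}\}$, let $x$ be an H‑eigenvector of $\mathcal L$ corresponding to $\lambda\neq 1$, and recall that $\lambda\ge 0$ by \cite[Lemma~5.2]{CPZ1} and $\lambda<3$ by Proposition~\ref{largeL}, so $\lambda\in[0,1)\cup(1,3)$. By \cite[Lemmas~3.1 and 4.1]{CPZ1} the three blocks of non‑cut vertices, namely $\{1,\dots,k-1\}$, $\{k+1,\dots,2k-2\}$ and $\{2k,\dots,3k-2\}$, are each either identically zero or nowhere zero on $x$; in the nonzero case the entries of the two pendant blocks are $\pm x_k/(1-\lambda)$ and $\pm x_{2k-1}/(1-\lambda)$ (so the corresponding edge monomials are $(x_k/(1-\lambda))^{k-1}$ and $(x_{2k-1}/(1-\lambda))^{k-1}$), while the middle block has entries $\pm\sqrt{x_kx_{2k-1}/(1-\lambda)}$, which constrains the sign of $x_kx_{2k-1}/(1-\lambda)$ and the parity of the number of negative entries. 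First I would dispose of the degenerate possibilities ($x=0$, which is impossible, or $x$ supported on a single cut vertex), which produce $\lambda\in\{0,2\}$ of case (i).

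For the remaining configurations I would substitute these block monomials into the two equations at the cut vertices $k$ and $2k-1$,
\[
(2-\lambda)x_k^{k-1}=\!\!\prod_{j\in e_1\setminus\{k\}}\!\!x_j+\!\!\prod_{j\in e_2\setminus\{k\}}\!\!x_j,\qquad
(2-\lambda)x_{2k-1}^{k-1}=\!\!\prod_{j\in e_2\setminus\{2k-1\}}\!\!x_j+\!\!\prod_{j\in e_3\setminus\{2k-1\}}\!\!x_j,
\]
introduce $t=x_k/x_{2k-1}$ when both cut values are nonzero, and eliminate the unknowns to obtain a polynomial identity in $\lambda$ for each configuration. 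Evenness of $k$ enters here: $k-1$ is odd, so $(1-\lambda)^{k-1}=-(\lambda-1)^{k-1}$, and $k-2$ is even, which turns the equations of Theorem~\ref{l1em2.1} into the $(\lambda-1)$‑forms of (ii)--(iv) and (vi); moreover $k/2\in\mathbb{N}$ supplies the degree‑$k/2$ terms responsible for the squared equations (vi) and (vii); and the configurations in which $x_k$ and $x_{2k-1}$ have opposite signs --- which cannot occur for odd $k$ --- become admissible for $\lambda>1$ and are responsible for (v) and (vii). A key point is that the argument in the proof of Theorem~\ref{l1em2.1} ruling out $\lambda>1$ in the ``all blocks nonzero'' subcase breaks down when $k$ is even (the pertinent equation now has a root in $(2,3)$), so those branches must be retained; this is the source of the extra roots lying in $(2,3)$.

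The final step is to count the real roots of each of the seven equations on $[0,3)$ by the intermediate value theorem together with sign and monotonicity analysis of the relevant first and second derivatives, exactly as in the treatment of case (iv) in the proof of Theorem~\ref{l1em2.1}: evaluating at $0,\,1,\,\frac{2k}{k+1},\,2,\,3$ and locating the critical points; from the squared equations (vi) and (vii) one must drop the spurious factors that vanish at $\lambda=0$ and $\lambda=2$ (already recorded in (i)). Adding $\lambda=1$ from Proposition~\ref{lambda1} and checking that the $13$ roots coming from (i)--(vii) are pairwise distinct then gives exactly $14$. I expect the main difficulty to be this bookkeeping: on one side, the derivative estimates needed to pin down the exact number of roots in each subinterval (in particular the three roots of (iv), one in each of $(0,1)$, $(1,\frac{2k}{k+1})$, $(2,3)$, and the uniqueness of the roots of (v) and (vii) in $(2,3)$); on the other side, verifying in each configuration that the candidate $\lambda$ is actually realized by a sign‑consistent eigenvector, since the parity of the negative entries in each block is forced and must be compatible with the cut‑vertex equations.
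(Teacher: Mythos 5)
Your proposal follows essentially the same route as the paper's proof: the same case analysis on which blocks of non-cut vertices vanish, elimination via $t=x_k/x_{2k-1}$ at the two cut-vertex equations, parity bookkeeping for the signs within each block (the paper's subcases with $p_i+q_i=k-1$ or $k-2$, $p_i$ odd/even, which is exactly the "parity of negative entries" you flag), and intermediate-value plus monotonicity arguments to count the roots in $(0,1)$, $(1,\frac{2k}{1+k})$ and $(2,3)$. The only caveat is that for even $k$ the block lemma gives only absolute-value relations (the paper uses \cite[Lemma 3.1]{CPZ1} together with \cite[Lemma 2.6]{hyper2}, not the odd-$k$ form $x_j=\pm\sqrt{x_kx_{2k-1}/(1-\lambda)}$ with its sign constraint), but since you explicitly defer the sign-consistency to the parity bookkeeping, this does not change the argument.
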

\begin{proof}
Suppose that $E=\{\{1,\cdots,k\},\{k,\cdots,2k-1\},\{2k-1,\cdots,3k-2\}\}$, and $x\in\Re^n$ be an H-eigenvector of $\mathcal{L}$ corresponding to $\lambda \neq 1$.

By \cite[Lemma 3.1]{CPZ1} and \cite[Lemma 2.6]{hyper2}, we have $|x_1|=\cdots=|x_{k-1}|=|\frac{x_k}{1-\lambda}|$ if there are nonzero; $|x_{k+1}|=\cdots=|x_{2k-2}|=\sqrt{|\frac{x_kx_{2k-1}}{1-\lambda}|}$ if there are nonzero; and $|x_{2k}|=\cdots=|x_{3k-2}|= |\frac{x_{2k-1}}{1-\lambda}|$ if there are nonzero.

The proof is divided into four cases, which contain several subcases respectively.

{\bf Case 1:} We assume $x_k=0$ and $x_{2k-1}=0$. By \cite[Lemma 2.6]{hyper2}, we have $x_1=0$, $x_{k+1}=0$ and $x_{2k}=0$. So, this case does not happen.

{\bf Case 2:} We assume $x_k=0$ and $x_{2k-1}\neq0$. Then, $x_1=0$ and $x_{k+1}=0$.

(a) If $x_{2k}=0$, then $\lambda =2$ is an H-eigenvalue.

(b) Let $p+q=k-1$. If $x_{2k} \neq 0$,  we have
\begin{equation*}
  (\lambda-2) x^{k-1}_{2k-1}= - (\frac{x_{2k-1}}{\lambda-1})^p  (\frac{x_{2k-1}}{1-\lambda})^q.
\end{equation*}

(b1) If $p$ is odd, we have
\begin{equation*}
  (\lambda-2) x^{k-1}_{2k-1}= - (\frac{x_{2k-1}}{\lambda-1})^{k-1}  = (\frac{x_{2k-1}}{1-\lambda})^{k-1},
\end{equation*}
which deduces
\begin{equation}\label{eqzlp1}
  (\lambda-2)(1-\lambda)^{k-1} -1=0.
\end{equation}
Since $(\lambda-2)(1-\lambda)^{k-1}<0$ when $\lambda<1$ or $\lambda>2$, the equation (\ref{eqzlp1}) has no any real root under this condition.

(b2) If $p$ is even, we have
\begin{equation*}
  (\lambda-2) x^{k-1}_{2k-1}= - (\frac{x_{2k-1}}{1-\lambda})^{k-1}  = (\frac{x_{2k-1}}{\lambda-1})^{k-1},
\end{equation*}
which implies
\begin{equation*}
  (\lambda-2)(\lambda-1)^{k-1} -1=0.
\end{equation*}
Let $$f(\lambda)=  (\lambda-2)(\lambda-1)^{k-1} -1.$$ We compute
$$f'(\lambda)=(\lambda-1)^{k-2}(k \lambda -2k+1).$$ Clearly, $f'(\lambda)>0$ if $\lambda > \frac{2k-1}{k}$ and $f'(\lambda)<0$ if $\lambda<\frac{2k-1}{k}$. It is easy to see that $$f(0)=1,\quad f(1)=-1,\quad f(2)=-1,\quad f(3)=2^{k-1}-1>0.$$ Thus, the equation $f(\lambda)=0$ has two real roots respectively in  $(0,1)$ and $(2,3)$. So, (ii) holds.

{\bf Case 3:} We assume that $x_k\neq0$ and $x_{2k-1}=0$. After a discussion similar to Case 2, we can show that either $\lambda=2$ or (ii) also holds in this case.

{\bf Case 4:} We assume that $x_k\neq0$ and $x_{2k-1}\neq0$.

(a)  If $x_{k+1}=0$, $x_1=0$ and $x_{2k}=0$, then $\lambda =2$ is an H-eigenvalue.

(b) If $x_{k+1}=0$, $x_1\neq 0$ and $x_{2k}=0$, we have
\begin{equation*}
  \begin{cases}
    (\lambda-2) x^{k-1}_{k}= - (\frac{x_{k}}{\lambda-1})^p  (\frac{x_{k}}{1-\lambda})^q, \quad p+q=k-1,\\
    (\lambda-2) x^{k-1}_{2k-1}= 0,
  \end{cases}
\end{equation*}
which implies that this case does not happen. Similarly, the case $x_{k+1}=0$, $x_1 = 0$ and $x_{2k} \neq 0$ does not exist.

(c) If $x_{k+1}=0$, $x_1\neq 0$ and $x_{2k}\neq0$, then we have $(\lambda-2)(\lambda-1)^{k-1} -1=0$. By (b2), $\lambda$ is its root in the interval $(0,1)$ or $(2,3)$. That is, (ii) holds.

(d) If $x_{k+1}\neq0$, $x_1 = 0$ and $x_{2k} = 0$, we have
\begin{equation*}
  \begin{cases}
    (\lambda-2) x^{k-1}_{k}= -x_{2k-1} (\pm\sqrt{|\frac{x_k x_{2k-1}}{\lambda-1}|})^{k-2}, \\
    (\lambda-2) x^{k-1}_{2k-1}= -x_{k} (\pm\sqrt{|\frac{x_k x_{2k-1}}{\lambda-1}|})^{k-2},
  \end{cases}
\end{equation*}
which deduces
\begin{equation*}
  (\lambda-2) (x^{k}_{k}-x^{k}_{2k-1})=0.
\end{equation*}
This shows $\lambda =2$ or $ x_k=\pm x_{2k-1}$. So, In the second case, $\lambda$ is a real root of the equation
\begin{equation*}
  (\lambda-2)^2 (\lambda-1)^{k-2} -1=0.
\end{equation*}
Let $$f(\lambda)= (\lambda-2)^2 (\lambda-1)^{k-2} -1.$$
By a straightforward computation, we have
$$f'(\lambda)= (\lambda-2) (\lambda-1)^{k-3} (k\lambda -2k+2),$$ which implies that  $f'(\lambda)< 0$ if $\lambda<1$ or $\frac{2k-2}{k}<\lambda<2$ and $f'(\lambda)> 0$ if $1<\lambda<\frac{2k-2}{k}$ or $\lambda>2$.
Clearly, $f(0)=3$,  $f(1)=-1$,  $f(2)=-1$ and $f(3)=2^{k-2}-1>0$. We also have $f(\lambda)<0$ when $1<\lambda<2$. Hence, the equation $f(\lambda)=0$ has two real roots. One is in $(0,1)$ and the other is in $(2,3)$. Thus, (iii) holds.

(e) If $x_{k+1}\neq0$, $x_1 \neq 0$ and $x_{2k} =0$, we have
\begin{equation*}
  \begin{cases}
    (\lambda-2) x^{k-1}_{k}= -x_{2k-1}(\sqrt{|\frac{x_k x_{2k-1}}{\lambda-1}|})^{p_1} (-\sqrt{|\frac{x_k x_{2k-1}}{\lambda-1}|})^{q_1}-(\frac{x_{k}}{\lambda-1})^{p_2} (\frac{x_{k}}{1-\lambda})^{q_2}, \\
    (\lambda-2) x^{k-1}_{2k-1}= -x_{k}(\sqrt{|\frac{x_k x_{2k-1}}{\lambda-1}|})^{p_1} (-\sqrt{|\frac{x_k x_{2k-1}}{\lambda-1}|})^{q_1},\\
    p_1+q_1=k-2, \quad  p_2+q_2=k-1.
  \end{cases}
\end{equation*}
Let $t:=\frac{x_{2k-1}}{x_k}$. We consider the following two subcases.

(e1) If $p_2$ is odd, we have
\begin{equation*}
  \begin{cases}
    (\lambda-2) (\lambda-1)^{k-1} t^k=(\lambda-2)(\lambda-1)^{k-1}+1, \\
    (\lambda-2)^2 (\lambda-1)^{k-2} t^k=1.
  \end{cases}
\end{equation*}
This implies that $\lambda$ is a real root of the equation $ (\lambda-2)^2 (1-\lambda)^{k-1}+1=0$.

Let $f(\lambda)=(\lambda-2)^2 (1-\lambda)^{k-1}+1$, we have $$
f'(\lambda)=(\lambda-2)(1-\lambda)^{k-2}[-(1+k)\lambda+2k].$$
Clearly, $f'(\lambda)<0$ if $\lambda>2$. Also, $f(2)=1>0$ and $f(3)=(-2)^{k-1}+1<0$. Hence, the equation $f(\lambda)=0$ has a unique root in $(2,3)$. That is, (v) holds.

(e2) If $p_2$ is even, we have
\begin{equation*}
  \begin{cases}
    (\lambda-2) (\lambda-1)^{k-1} t^k=(\lambda-2)(\lambda-1)^{k-1}-1, \\
    (\lambda-2)^2 (\lambda-1)^{k-2} t^k=1.
  \end{cases}
\end{equation*}
This shows that $\lambda$ is a real root of the equation $$(\lambda-2)^2 (\lambda-1)^{k-1}-2\lambda +3=0.$$
 Let $$f(\lambda)=(\lambda-2)^2 (\lambda-1)^{k-1} +3,\quad g(\lambda)=2\lambda.$$
By a straightforward computation, we have the derivatives
 $$f'(\lambda)=(\lambda-2)(\lambda-1)^{k-2}[(1+k)\lambda-2k]$$ and$$f''(\lambda)=(\lambda-1)^{k-3}[(k^2+k)\lambda^2 -4k^2 \lambda +4k^2-4k+2].$$
Also, $f(0)=-1$, $f(1)=3$, $f(2)=3$, $f(3)=2^{k-1}+3\geq 11$ and $$f(\frac{2k}{1+k})=(\frac{2k}{1+k}-2)^2 (\frac{2k}{1+k}-1)^{k-1} +3 \leq max\{ (2-\lambda)(\lambda-1)+3 | 1<\lambda<2 \}=\frac{13}{4}.$$  Moreover, $g(0)=0$, $g(1)=2$, $g(2)=4$ and $g(3)=6$. If $k=4$, then $\frac{2k}{1+k}=\frac{8}{5}$ and
 $g(\frac{8}{5})=\frac{16}{5}>f(\frac{8}{5})$. If $k\ge 6$, we have
 $$g(\frac{2k}{1+k})=4-\frac{4}{k+1}\ge \frac{10}{3}>f(\frac{2k}{1+k}).$$
 Hence, the root of the equation $f(\lambda)-g(\lambda)=0$ exists in $(0,1)$, $(1,\frac{2k}{1+k})$ and $(2,3)$, respectively. After a similar discussion to the subcase (a) in the proof of Theorem \ref{l1em2.1},  we show that they are unique in each interval.

 In addition, if $x_{k+1}\neq0$, $x_1 = 0$ and $x_{2k} \neq 0$, after a discussion similar to Case 4 (e), we also have (iv).

(f) If $x_{k+1}\neq0$, $x_1\neq 0$ and $x_{2k}\neq 0$, we have
\begin{equation*}
  \begin{cases}
     (\lambda-2) x^{k-1}_{k}= -x_{2k-1}( \sqrt{|\frac{x_k x_{2k-1}}{\lambda-1}|})^{p_1} (-\sqrt{|\frac{x_k x_{2k-1}}{\lambda-1}|})^{q_1}-(\frac{x_{k}}{\lambda-1})^{p_2 } (\frac{x_{k}}{1-\lambda})^{q_2}, \\
    (\lambda-2) x^{k-1}_{2k-1}=-x_{k}(\sqrt{|\frac{x_k x_{2k-1}|}{\lambda-1}})^{p_1} (-\sqrt{|\frac{x_k x_{2k-1}}{\lambda-1}|})^{q_1}-(\frac{x_{2k-1}}{\lambda-1})^{p_3}(\frac{x_{2k-1}}{1-\lambda})^{q_3},\\
    p_1+q_1=k-2,\quad p_2+q_2=k-1,\quad p_3+q_3=k-1.
  \end{cases}
\end{equation*}

(f1) If $p_2$ and $p_3$ are odd, we have
\begin{equation*}
  [(\lambda-2)-\frac{1}{(1-\lambda)^{k-1}}](x^k_k-x^k_{2k-1})=0.
\end{equation*}
Since $(\lambda-2)(1-\lambda)^{k-1}-1\neq 0$, we get $x_k =\pm x_{2k-1}$ and
\begin{equation*}
  [(\lambda-2)(1-\lambda)^{k-1}-1]^2-(1-\lambda)^k=0.
\end{equation*}
Let $$
h(\lambda)=(\lambda-2)(1-\lambda)^{k-1}-1,\quad f(\lambda)=h(\lambda)^2,\quad g(\lambda)=(1-\lambda)^k.$$
We compute
$$f'(\lambda)=2h(\lambda)(1-\lambda)^{k-2}(-\lambda k+2k-1),\quad g'(\lambda)=k(\lambda-1)^{k-1}.$$
Clearly, $f(3)-g(3)=2^{2k-2}+1>0$. If $\lambda\geq3$, we have
$$f'(\lambda)-g'(\lambda)\geq (2^{k}+2)(\lambda k-2k+1)-k(\lambda-1)\geq 17\lambda k -35k +18\geq 16k+18>0.$$ Hence, the equation $f(\lambda)-g(\lambda)=0$ has no real root when $\lambda \ge 3$.
If $\lambda \in (1,2)$, we have $$
-2h(\lambda)(\lambda k-2k+1)-k(\lambda-1)<2(\lambda k-2k+1)-k(\lambda-1)<-k+2<0,$$
which implies $f'(\lambda)-g'(\lambda)<0$. Since $f(1)-g(1)=1$ and $f(2)-g(2)=0$, the equation $f(\lambda)-g(\lambda)=0$ has no real root in $(1,2)$. If $\lambda \in (2,3)$, denote $$r(\lambda)= -2h(\lambda)(\lambda k-2k+1)-k(\lambda-1).$$ We have
 $$r'(\lambda)=-2(\lambda-1)^{k-2}[(\lambda-2)k+1]^2+2k(\lambda-2)(\lambda-1)^{k-1}+k.$$
  Let $t=\lambda-1$. Thus, $1<t<2$ since $\lambda\in (2,3)$, and
\begin{equation*}
  r'(t)=-2t^{k-2}[(t^2-2t+1)k^2-(t^2-3t+2)k+1]+k.
\end{equation*}
Let $$s(t)=(t^2-2t+1)k^2-(t^2-3t+2)k+1.$$ Obviously, $s(t)$ is monotonically increasing in $(1,2)$. Thus, $r'(t)=0$ has a unique root in $(1,2)$. This shows that $f(\lambda)-g(\lambda)$ is first decreasing and then increasing in $(2,3)$. Hence,  the equation $f(\lambda)-g(\lambda)=0$ has the unique root in $(2,3)$ and then (vii) holds.

(f2) If $p_2$ and $p_3$ are even, we have
\begin{equation*}
  \begin{cases}
      (\lambda-2) x^{k-1}_{k}= \pm x_{2k-1}( \sqrt{|\frac{x_k x_{2k-1}}{\lambda-1}|})^{k-2} -(\frac{x_{k}}{1-\lambda})^{k-1}, \\
    (\lambda-2) x^{k-1}_{2k-1}=\mp x_{k}(\sqrt{|\frac{x_k x_{2k-1}}{\lambda-1}|})^{k-2} -(\frac{x_{2k-1}}{1-\lambda})^{k-1},
  \end{cases}
\end{equation*}
which deduces
\begin{equation*}
  [(\lambda-2)(\lambda-1)^{k-1}-1](x^k_k-x^k_{2k-1})=0.
\end{equation*}
So, we obtain $(\lambda-2)(\lambda-1)^{k-1}-1=0$ or $x_k =\pm x_{2k-1}$.
If $x_k =\pm x_{2k-1}$, then
$$[(\lambda-2)(\lambda-1)^{k-1}-1]^2-(\lambda-1)^k=0.$$
 After a discussion similar to (f1), we show that the above equation has the unique root in $(2,3)$.
If  $(\lambda-2)(\lambda-1)^{k-1}-1=0$, then by (ii), we have $\lambda\in(0,1)$ or $\lambda\in (2,3)$. Thus, (vi) holds.

(f3) If $p_2$ is odd and $p_3$ is even or $p_2$ is even and $p_3$ is odd, we have
\begin{equation*}
  \begin{cases}
      (\lambda-2) x^{k-1}_{k}= \pm x_{2k-1}( \sqrt{|\frac{x_k x_{2k-1}}{\lambda-1}|})^{k-2} \mp (\frac{x_{k}}{\lambda-1})^{k-1}, \\
    (\lambda-2) x^{k-1}_{2k-1}=\pm x_{k}(\sqrt{|\frac{x_k x_{2k-1}}{\lambda-1}|})^{k-2} \mp (\frac{x_{2k-1}}{1-\lambda})^{k-1},
  \end{cases}
\end{equation*}
or
\begin{equation*}
  \begin{cases}
    [(\lambda-2)(\lambda-1)^{k-1}\pm1]^2 x^{2k}_{k}= x^{k}_{k}x^{k}_{2k-1}(\lambda-1)^{k}, \\
    [(\lambda-2)(1-\lambda)^{k-1}\pm1]^2 x^{2k}_{2k-1}= x^{k}_{k}x^{k}_{2k-1}(1-\lambda)^{k}.
  \end{cases}
\end{equation*}
Hence,
\begin{equation*}
  (\lambda-2)(\lambda-1)^{k-1}(x^{2k}_{k}-x^{2k}_{2k-1})=0,
\end{equation*}
which implies $\lambda=2$ or $x_k =\pm x_{2k-1}$. We assume $x_k =\pm x_{2k-1}$, then
\begin{equation*}
    |(\lambda-2)(\lambda-1)^{k-1}\pm1|=|(\lambda-2)(1-\lambda)^{k-1}\pm1|.
\end{equation*}
This shows that $\lambda=2$. Thus, (i) holds.
\end{proof}

\Section{Numerical experiments and convergence analysis}

We now give some numerical experiments to show the conclusions given in Theorems \ref{l1em2.1} and \ref{l1em2.3}.  We apply the bisection method to solve the real root of the polynomial equations given in Theorems \ref{l1em2.1} and \ref{l1em2.3}. For fixed $k$, the computational complexity of this method is
$O(\log\frac{1}{\epsilon})$, where $\varepsilon$ is a given tolerance. We also investigate the changing trend of Laplacian  H-spectrum of $G_{k,3}$ as $k$ increasing.

By making a lot of experiments with $3\leq k\leq 50$, we find  some good properties of  Laplacian  H-spectrum of $G_{k,3}$. The numerical results are reported in Figure \ref{fig1}, where (a) and (b) show the distribution of all H-eigenvalues of Laplacian tensors for odd/even-uniform loose paths with length three, respectively.
\begin{figure}[!htb]
\centering
\subfigure[]{\includegraphics[height=2.5in,width=3in]{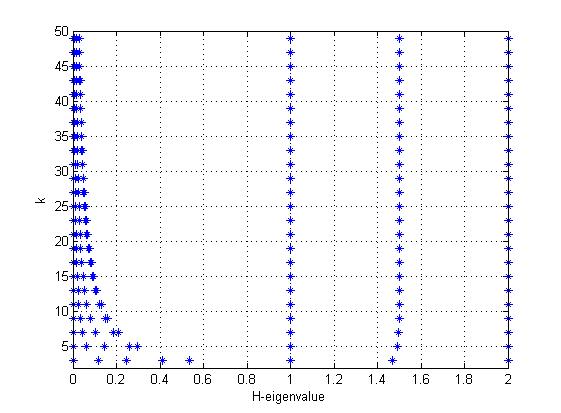}}
\subfigure[]{\includegraphics[height=2.5in,width=3in]{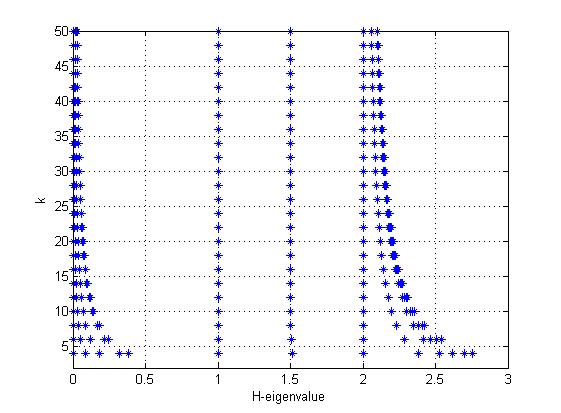}}
\caption{{\small Distribution of Laplacian H-spectrum of $G_{k,3}$. (a) $k$ is odd; (b) $k$ is even.}}
 \label{fig1}
\end{figure}
From Figure \ref{fig1}, we see that Laplacian  H-spectrum of $G_{k,3}$ converges to $\{0,1,1.5,2\}$ when $k$ goes to infinity.
\begin{Theorem}\label{mythm}{\rm
Let $\mathcal{L}_k$ be Laplacian tensor of $G_{k,3}$ and $\lambda_k$ be its an H-eigenvalue. Then $\lim\limits_{k\to\infty}\lambda_k=\lambda^*$, where $\lambda^*\in \{0,1,1.5,2\}$.}
\end{Theorem}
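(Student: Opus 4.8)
The statement should be read branch by branch: for all large \(k\) every H-eigenvalue \(\lambda_k\neq 1\) of \(\mathcal L_k\) is a root of one of the finitely many polynomial equations listed in Theorems \ref{l1em2.1} (for \(k\) odd) and \ref{l1em2.3} (for \(k\) even), lying in a prescribed interval, and the plan is to show that the root(s) of each such equation converge as \(k\to\infty\), with limit in \(\{0,1,1.5,2\}\). The eigenvalue \(\lambda=1\) contributes \(\lambda^\ast=1\) directly via Proposition \ref{lambda1}. The workhorse estimate is elementary: all the defining equations are built from the ``power blocks'' \((1-\lambda)^{k-1},(1-\lambda)^k,(1-\lambda)^{k-2}\) and \((\lambda-1)^{k-1},(\lambda-1)^k\), and for any \(\varepsilon>0\), if \(\lambda\in[\varepsilon,2-\varepsilon]\) then \(|1-\lambda|^{k}\le(1-\varepsilon)^{k}\to 0\), while if \(\lambda\in[2+\varepsilon,3]\) then \((\lambda-1)^{k}\ge(1+\varepsilon)^{k}\to\infty\). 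I will run a subsequence argument throughout: a subsequential limit \(\ell\in[0,3]\) of a branch, inserted into the (suitably rescaled) defining equation, forces a power block either to vanish or to blow up, which pins \(\ell\) to one of \(0,1.5,2\); since all subsequential limits coincide, the branch converges.

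For the branches whose root lies in \((0,1)\) --- namely (ii), (iii), the \((0,1)\)-root of (iv), and (v) in Theorem \ref{l1em2.1}, and (ii), (iii), the \((0,1)\)-root of (iv), and (vi) in Theorem \ref{l1em2.3} --- the claim is that the limit is \(0\). In each case one isolates the unique power block and writes the equation as ``(bounded rational function of \(\lambda\), bounded away from \(0\)) \(=\) (power block)''; for instance in (ii) of Theorem \ref{l1em2.1} one has \((1-\lambda_k)^{k-1}=\tfrac{1}{2-\lambda_k}\in(\tfrac12,1)\), so \((k-1)\ln(1-\lambda_k)\) is bounded, forcing \(\ln(1-\lambda_k)\to 0\), i.e. \(\lambda_k\to 0\). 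The squared equations (v)/(vi) are handled the same way after observing that if \(\lambda_k\to\ell\in(0,1)\) the block tends to \(0\), so the left side tends to \(1\) while the right side \((1-\ell)^k\to 0\), a contradiction; hence \(\lambda_k\to 0\) (indeed \(\lambda_k\sim(\ln 4)/k\), but this refinement is not needed). The constant branches \(\lambda=0\), \(\lambda=2\) are trivial.

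For the branches whose root lies in \((2,3)\) --- (ii), (iii), (iv), (v), (vi), (vii) in Theorem \ref{l1em2.3} --- the claim is that the limit is \(2\): if such a root stayed \(\ge 2+\varepsilon\) along a subsequence, the block \((\lambda_k-1)^{k}\to\infty\) (with bounded, nonvanishing coefficient) makes the dominant term blow up while the remaining terms stay bounded, contradicting the equation; hence \(\lambda_k\to 2\). Finally the delicate branch is the \((1,\tfrac{2k}{k+1})\)-root of (iv) (in both theorems), whose limit I claim is \(1.5\): since \(\tfrac{2k}{k+1}\to 2\) and on this interval \(\lambda_k-1<1\), the block tends to \(0\), so the equation degenerates to \(2\lambda-3=0\); the subsequential limit \(\ell=1\) gives the absurdity \(-1=0\), and \(\ell=2\) is excluded because there the coefficient \((\lambda_k-2)^2\to 0\) kills the left side while the right side \(3-2\lambda_k\to -1\ne 0\) (using \(\lambda_k-1<1-\tfrac{2}{k+1}\), so the block stays \(\le e^{-2+o(1)}\)). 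Thus \(\ell=1.5\). Collecting limits, in both parities the accumulation set of all Laplacian H-eigenvalues is \(\{0,1,1.5,2\}\), which is Theorem \ref{mythm}.

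The main obstacle is exactly this last branch, and more generally the need to be careful at interval endpoints where a power block is of the borderline form \((1\pm c/k)^{k}\) and tends to a finite nonzero limit \(e^{\pm c}\) rather than to \(0\) or \(\infty\). The remedy is the one used in the proofs of Theorems \ref{l1em2.1} and \ref{l1em2.3}: set \(\delta_k:=\lambda_k-2\) (or \(\delta_k:=\lambda_k\) near \(0\)), write the block as \(\exp\!\big(k\ln(1+\delta_k)\big)\), and use the available interval bounds (e.g. \(\delta_k>\tfrac{2}{k+1}\) in case (iv), or \(\delta_k^2(1+\delta_k)^{k-1}=1\) which forces \((k-1)\delta_k\to\infty\) and hence \(\delta_k\to 0\) in the \((2,3)\)-branches) to show the relevant product still cannot balance the equation. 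Everything else is the same monotonicity and sign bookkeeping already carried out for Theorems \ref{l1em2.1} and \ref{l1em2.3}.
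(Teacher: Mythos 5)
Your proposal is correct in substance, but you should know that the paper itself does not actually prove Theorem \ref{mythm}: after Lemmas \ref{lem1}--\ref{lem8}, which treat only eight of the branches produced by Theorems \ref{l1em2.1} and \ref{l1em2.3}, the authors state explicitly that they cannot prove the full statement and record it as Conjecture \ref{lem9}. Their technique is per branch: solve the defining equation for $k$ as a function of the root (e.g.\ $k=-\ln(t_k+1)/\ln t_k+1$), use monotonicity of that function to show the roots form a monotone sequence, and then identify the limit by a contradiction computation; this fails for the ``squared'' equations in Theorem \ref{l1em2.1}(v) and Theorem \ref{l1em2.3}(v)--(vii), where $k$ cannot be isolated, and those branches (together with the even-$k$ roots in $(0,1)$) are simply left open. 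Your route is genuinely different: no monotonicity at all, only boundedness of each branch and a subsequence argument in which the power block $(1-\lambda)^{k-1}$ or $(\lambda-1)^{k-1}$ must either vanish or blow up unless the subsequential limit is pinned to $0$, $1.5$ or $2$ (with $1$ supplied by the constant eigenvalue), and your treatment of the delicate $(1,\tfrac{2k}{k+1})$ branch and of the endpoint behaviour $(1\pm c/k)^k$ is sound. This covers every branch, including the squared ones, so a carefully written version would settle the paper's conjecture rather than merely reproduce its partial result. One phrase does need repair: for the squared equations with roots in $(2,3)$ (Theorem \ref{l1em2.3}(vi)--(vii)) the ``remaining terms'' do \emph{not} stay bounded, since $(\lambda-1)^k\to\infty$ as well; the correct comparison is the rescaling you allude to, e.g.\ dividing $[(\lambda-2)(\lambda-1)^{k-1}\pm 1]^2=(\lambda-1)^k$ by $(\lambda-1)^k$ gives a left-hand side eventually bounded below by $\tfrac14(\lambda-2)^2(\lambda-1)^{k-2}\to\infty$ on $\lambda\ge 2+\varepsilon$, the desired contradiction. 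With that fix, and with the branch-by-branch reading of the statement made explicit (an arbitrary selection of one eigenvalue per $k$ need not converge, so the honest formulation is that the set of Laplacian H-eigenvalues accumulates exactly at $\{0,1,1.5,2\}$), your argument is complete and strictly stronger than what the paper establishes.
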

Yuan, Qi and Shao \cite{yqslimit} showed that $\{ \lambda(\mathcal{L}_k)\}$ is a strictly decreasing sequence. Clearly, the conclusion in Theorem \ref{mythm} is much stronger than theirs. To show Theorem \ref{mythm}, we need the following lemmas.
\begin{Lemma}\label{lem1}{\rm
Let $k$ be odd and $\lambda_k$ be the real root of $(\lambda-2)(1-\lambda)^{k-1}+1=0$ in $(0,1)$. Then $\{\lambda_k\}$ is a strictly decreasing sequence and $\lim\limits_{k\to\infty}\lambda_k=0$.}
\end{Lemma}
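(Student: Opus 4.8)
The plan is to study the function $\phi_k(\lambda):=(2-\lambda)(1-\lambda)^{k-1}-1$ on the interval $(0,1)$, which is minus the left-hand side of the defining equation, so that $\lambda_k$ is exactly a root of $\phi_k$. First I would record the basic shape of $\phi_k$ there: since $\lambda\in(0,1)$ gives $1-\lambda\in(0,1)$, one has $\phi_k(0)=1>0$ and $\phi_k(1)=-1<0$, while a one-line differentiation gives $\phi_k'(\lambda)=-(1-\lambda)^{k-2}\bigl(2k-1-k\lambda\bigr)$, which is strictly negative for all $\lambda<1$ because $(1-\lambda)^{k-2}>0$ and $2k-1-k\lambda>k-1>0$ there. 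Hence $\phi_k$ is strictly decreasing on $(0,1)$, so $\lambda_k$ is its unique root (this also re-confirms the uniqueness asserted in Proposition~\ref{pro5.4}(ii) and Theorem~\ref{l1em2.1}(ii)), and in particular the identity $(2-\lambda_k)(1-\lambda_k)^{k-1}=1$ holds; this identity will drive everything that follows.

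For the monotonicity of the sequence I would compare the consecutive odd indices $k$ and $k+2$ by evaluating $\phi_{k+2}$ at $\lambda_k$. Factoring out $(1-\lambda_k)^{k-1}$ and using the identity above,
\[
\phi_{k+2}(\lambda_k)=(2-\lambda_k)(1-\lambda_k)^{k-1}(1-\lambda_k)^2-1=(1-\lambda_k)^2-1<0,
\]
since $1-\lambda_k\in(0,1)$. As $\phi_{k+2}(\lambda_{k+2})=0$, we get $\phi_{k+2}(\lambda_k)<\phi_{k+2}(\lambda_{k+2})$, and because $\phi_{k+2}$ is strictly decreasing on $(0,1)$ this forces $\lambda_k>\lambda_{k+2}$. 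Thus $\{\lambda_k\}$, indexed over the odd integers, is strictly decreasing.

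For the limit, the sequence is decreasing and bounded below by $0$, so it converges to some $L\ge 0$. From $(1-\lambda_k)^{k-1}=\tfrac{1}{2-\lambda_k}>\tfrac12$ I take logarithms to obtain $0>\ln(1-\lambda_k)>-\tfrac{\ln 2}{k-1}$, and letting $k\to\infty$ through odd values gives $\ln(1-\lambda_k)\to 0$, hence $1-\lambda_k\to 1$, i.e. $\lambda_k\to 0$ and $L=0$.

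None of this is deep; the only point requiring care is that the lemma concerns a sequence indexed by the \emph{odd} integers, so the decreasing property must be extracted from the discrete comparison of $\phi_{k+2}$ at $\lambda_k$ rather than from a continuous-parameter/implicit-function argument, and one must keep track of the parity of $k-1$ (resp. $k-2$) when handling the sign of powers of $1-\lambda$. I expect the mild bookkeeping in the derivative computation for $\phi_k'$ to be the most error-prone step, and everything else to be routine.
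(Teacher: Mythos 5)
Your proof is correct, and it reaches both conclusions by a somewhat different route than the paper. For the monotonicity, the paper substitutes $t_k=1-\lambda_k$, rewrites the root equation as $k=-\frac{\ln(t_k+1)}{\ln t_k}+1$, and deduces $\lambda_{k_1}>\lambda_{k_2}$ for $k_1<k_2$ from the (asserted) monotonicity of $g(t)=-\frac{\ln(t+1)}{\ln t}$ on $(0,1)$; you instead evaluate $\phi_{k+2}$ at $\lambda_k$, use the identity $(2-\lambda_k)(1-\lambda_k)^{k-1}=1$ to get $\phi_{k+2}(\lambda_k)=(1-\lambda_k)^2-1<0$, and conclude from the strict decrease of $\phi_{k+2}$ on $(0,1)$. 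Your comparison argument is more elementary and self-contained, since it avoids having to verify the monotonicity of $g$ (which the paper states without proof), and it also re-derives the uniqueness of the root along the way. For the limit, both arguments rest on the same identity $(1-\lambda_k)^{k-1}=\frac{1}{2-\lambda_k}$; the paper passes to $(k-1)$-th roots and is rather terse, whereas your logarithmic squeeze $0>\ln(1-\lambda_k)>-\frac{\ln 2}{k-1}$ is explicit, gives a quantitative rate, and in fact yields the limit directly without needing the monotone-convergence step. One stylistic remark: your caution about the parity of $k-1$ or $k-2$ is unnecessary here, since on $(0,1)$ one has $1-\lambda>0$, so all powers are positive regardless of parity; that is precisely why your argument, unlike some other lemmas in the paper, works verbatim for even $k$ as well.
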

\begin{proof} Let $f(\lambda)=(\lambda-2)(1-\lambda)^{k-1}+1$. For fixed $k$, by Theorem \ref{l1em2.1} (ii), $\lambda_k$ is the unique real root of $f(\lambda)=0$ in $(0,1)$.
Let $t_k=1-\lambda_k$. Then $t_k\in (0,1)$ and
$$f(\lambda_k)=0\quad \Leftrightarrow \quad k=-\frac{\ln{(t_k+1)}}{\ln{t_k}}+1.$$
It is easy to see that  $g(t):=-\frac{\ln{(t+1)}}{\ln{t}}$ is a strictly increasing function in $(0,1)$. Hence, we obtain
$$k_1<k_2 \quad \Leftrightarrow \quad t_{k_1}<t_{k_2} \quad \Leftrightarrow \quad \lambda_{k_1}>\lambda_{k_2}, $$
which implies that $\{\lambda_k\}$ is a strictly decreasing sequence. Therefore,  the limit of $\{\lambda_k\}$  must exist.
Let $\lim\limits_{k\to\infty}\lambda_k=\lambda^*$. Then $0\le \lambda^*\le 1$.  It follows from $\lim \limits_{k\rightarrow \infty}(1-\lambda_k)=\lim \limits_{k\rightarrow\infty}(\frac{1}{2-\lambda_k})^{\frac{1}{k-1}}$ that $\lambda^*=0$.
\end{proof}
\begin{Lemma}\label{lem2}{\rm
Let $k$ be even and $\lambda_k$ be the real root of $(\lambda-2)(\lambda-1)^{k-1}-1=0$ in $(2,3)$. Then $\{\lambda_k\}$ is a strictly decreasing sequence and $\lim\limits_{k\to\infty}\lambda_k=2$.}
\end{Lemma}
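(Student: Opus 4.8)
The plan is to mimic the proof of Lemma~\ref{lem1}, with the factor $1-\lambda$ there replaced by $\lambda-1$ here. By Theorem~\ref{l1em2.3}(ii) the equation $(\lambda-2)(\lambda-1)^{k-1}-1=0$ has exactly one root $\lambda_k$ in $(2,3)$, so $\lambda_k$ is well defined. First I would set $t_k:=\lambda_k-1\in(1,2)$; the defining equation then becomes $(t_k-1)\,t_k^{k-1}=1$, and since $\ln t_k>0$ this is equivalent to
\begin{equation*}
k \;=\; 1-\frac{\ln(t_k-1)}{\ln t_k} \;=:\; g(t_k).
\end{equation*}

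The next step is to show that $g$ is strictly decreasing on $(1,2)$. Differentiating,
\begin{equation*}
g'(t) \;=\; -\frac{1}{(\ln t)^2}\left(\frac{\ln t}{t-1}-\frac{\ln(t-1)}{t}\right),
\end{equation*}
and for $t\in(1,2)$ both $\frac{\ln t}{t-1}>0$ and $-\frac{\ln(t-1)}{t}>0$ (the latter because $\ln(t-1)<0$), so the parenthesis is positive and $g'(t)<0$; moreover $g(t)\to+\infty$ as $t\to1^{+}$ and $g(t)\to1$ as $t\to2^{-}$, which also confirms that a unique $t_k$ exists for every even $k\ge 4$. Hence $k_1<k_2 \Leftrightarrow g(t_{k_1})<g(t_{k_2}) \Leftrightarrow t_{k_1}>t_{k_2} \Leftrightarrow \lambda_{k_1}>\lambda_{k_2}$, so $\{\lambda_k\}$ is strictly decreasing; being bounded below by $2$ it has a limit $\lambda^{*}\in[2,3)$. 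To identify $\lambda^{*}$, rewrite the equation as $\lambda_k-1=\bigl(\tfrac{1}{\lambda_k-2}\bigr)^{1/(k-1)}$: if $\lambda^{*}>2$, then $\lambda_k-2\to\lambda^{*}-2\in(0,1]$, so the right-hand side tends to $1$ and $\lambda^{*}-1=1$, a contradiction. Therefore $\lambda^{*}=2$.

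The computation is routine once Lemma~\ref{lem1} is available as a template; the only delicate point is this last step, because $\bigl(\tfrac{1}{\lambda_k-2}\bigr)^{1/(k-1)}$ is of the indeterminate form $\infty^{0}$ exactly when $\lambda^{*}=2$. I would avoid the difficulty by arguing by contradiction: assuming $\lambda^{*}>2$ keeps $\lambda_k-2$ bounded away from both $0$ and $\infty$, forcing the $(k-1)$-th root to $1$. Equivalently, one may take logarithms in $(\lambda_k-2)(\lambda_k-1)^{k-1}=1$ to get $\ln(\lambda_k-2)+(k-1)\ln(\lambda_k-1)=0$; since $\ln(\lambda_k-1)\to\ln(\lambda^{*}-1)>0$ when $\lambda^{*}>2$, the term $(k-1)\ln(\lambda_k-1)$ would blow up while $\ln(\lambda_k-2)$ stays bounded, which is impossible.
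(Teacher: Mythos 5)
Your proposal is correct and follows essentially the same route as the paper's proof: the substitution $t_k=\lambda_k-1$, the rewriting $k=1-\ln(t_k-1)/\ln t_k$, monotonicity of $g(t)=-\ln(t-1)/\ln t$ on $(1,2)$ to get the decreasing sequence, and the limit identification via $\lambda_k-1=(1/(\lambda_k-2))^{1/(k-1)}$ by contradiction. The only difference is that you verify $g'(t)<0$ explicitly and add a logarithmic variant of the limit argument, where the paper simply asserts these steps.
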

\begin{proof} Let $f(\lambda)=(\lambda-2)(\lambda-1)^{k-1}-1$. For fixed $k$, by Theorem \ref{l1em2.3} (ii), $\lambda_k$ is the unique real root of $f(\lambda)=0$ in $(2,3)$.
Let $t_k=\lambda_k-1$. Then $t_k\in (1,2)$ and
$$f(\lambda_k)=0\quad \Leftrightarrow \quad  k=-\frac{\ln{(t_k-1)}}{\ln{t_k}}+1.$$
It is easy to see that $g(t):=-\frac{\ln{(t-1)}}{\ln{t}}$ is a strictly decreasing function in $(1,2)$. Therefore,
$$k_1<k_2 \quad \Leftrightarrow \quad t_{k_1}>t_{k_2} \quad \Leftrightarrow \quad \lambda_{k_1}>\lambda_{k_2}, $$
which implies that $\{\lambda_k\}$ is a strictly decreasing sequence. Hence, the limit of $\{\lambda_k\}$  must exist.
Let $\lim\limits_{k\to\infty}\lambda_k=\lambda^*$. Then $2\le \lambda^*\le 3$. Moreover, $\lambda^*=2$. In fact, if $\lambda^*\neq 2$, then by
$\lim \limits_{k\rightarrow \infty}(\lambda_k-1)=\lim \limits_{k\rightarrow\infty}(\frac{1}{\lambda_k-2})^{\frac{1}{k-1}}$, we must have $\lambda^*-1=1$ and hence $\lambda^*=2$. This is a contradiction.  Thus, we complete the proof.
\end{proof}
\begin{Lemma}\label{lem7}{\rm
Let $k$ be odd and $\lambda_k$ be the real root of $(\lambda-2)^2(1-\lambda)^{k-1}+2\lambda-3=0$ in $(1,\frac{2k}{1+k})$. Then $\{\lambda_k\}$ is a strictly increasing sequence and $\lim\limits_{k\to\infty}\lambda_k=1.5$.}
\end{Lemma}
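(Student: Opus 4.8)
The goal is to show that $\lambda_k$, defined as the unique root in $(1,\frac{2k}{1+k})$ of
\[
p_k(\lambda):=(\lambda-2)^2(1-\lambda)^{k-1}+2\lambda-3=0,
\]
forms a strictly increasing sequence converging to $\tfrac32$. The natural move, mirroring Lemmas \ref{lem1} and \ref{lem2}, is to solve the defining equation for $k$ as an explicit function of $\lambda$ and study its monotonicity. From $p_k(\lambda)=0$ we get $(\lambda-2)^2(1-\lambda)^{k-1}=3-2\lambda$; on the interval $1<\lambda<\frac{2k}{1+k}<2$ we have $3-2\lambda>0$ (indeed $3-2\lambda>3-\frac{4k}{1+k}=\frac{3-k}{1+k}$, which is positive only for small $k$, so one must instead note $\lambda<2$ gives $3-2\lambda>-1$ — better to argue directly that at a root $3-2\lambda>0$ since the left side is negative as $(1-\lambda)^{k-1}<0$ for $k$ odd and $\lambda>1$, forcing $\lambda<\tfrac32$). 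Thus every root in this interval in fact lies in $(1,\tfrac32)$, and $(2-\lambda)^2(\lambda-1)^{k-1}=3-2\lambda$ with both sides positive. Taking logarithms,
\[
k=1+\frac{\ln(3-2\lambda)-2\ln(2-\lambda)}{\ln(\lambda-1)}=:1+G(\lambda),\qquad \lambda\in(1,\tfrac32).
\]

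First I would verify that $G$ is a strictly \emph{decreasing} bijection from $(1,\tfrac32)$ onto $(\text{something},\infty)$: as $\lambda\to 1^+$, $\ln(\lambda-1)\to-\infty$ while the numerator tends to $\ln 1 - 2\ln 1 = 0$... no: numerator $\to \ln(1)-2\ln(1)=0$, so one needs the rate — near $\lambda=1$, $3-2\lambda\to 1$ and $2-\lambda\to1$, so the numerator $\to 0$ and the quotient $\to 0$, giving $k\to 1$; as $\lambda\to\tfrac32{}^-$, $3-2\lambda\to 0^+$ so the numerator $\to-\infty$ while $\ln(\lambda-1)\to\ln\tfrac12<0$, so $G(\lambda)\to+\infty$. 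Hence for each integer $k\ge 3$ (odd) there is a unique $\lambda_k$, and since $G$ is decreasing, $k_1<k_2$ forces $\lambda_{k_1}<\lambda_{k_2}$, i.e. $\{\lambda_k\}$ is strictly increasing. Monotonicity of $G$ itself I would get by differentiating $G(\lambda)=\big(\ln(3-2\lambda)-2\ln(2-\lambda)\big)/\ln(\lambda-1)$ and checking the sign of the numerator of $G'$; alternatively, and more robustly, I would argue directly from the equation that $p_k$ and $p_{k+2}$ are ordered on $(1,\tfrac32)$ — since $0<\lambda-1<\tfrac12<1$ there, $(\lambda-1)^{k+1}<(\lambda-1)^{k-1}$, hence $p_{k+2}(\lambda)<p_k(\lambda)$ pointwise, combined with the sign behavior $p_k(1)=1>0$, $p_k(\tfrac32)=-\tfrac14(\tfrac12)^{k-1}<0$ to locate the roots and conclude $\lambda_{k+2}>\lambda_k$. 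This comparison argument avoids the somewhat delicate differentiation and is the cleaner route.

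For the limit, since $\{\lambda_k\}$ is increasing and bounded above by $\tfrac32$, it converges to some $\lambda^*\le\tfrac32$. Rewriting the defining equation as $(\lambda_k-1)^{k-1}=\dfrac{3-2\lambda_k}{(2-\lambda_k)^2}$ and taking $(k-1)$-th roots gives
\[
\lambda_k-1=\left(\frac{3-2\lambda_k}{(2-\lambda_k)^2}\right)^{\!1/(k-1)}.
\]
If $\lambda^*<\tfrac32$, the right side tends to $1$ (the base stays bounded away from $0$ and $\infty$), forcing $\lambda^*-1=1$, i.e. $\lambda^*=2$, contradicting $\lambda^*\le\tfrac32$. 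Hence $\lambda^*=\tfrac32$.

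The main obstacle is establishing monotonicity of the sequence cleanly: the derivative of $G$ involves $\frac{-2}{3-2\lambda}+\frac{2}{2-\lambda}$ over $\ln(\lambda-1)$ minus a $\ln(\lambda-1)^{-2}$ term, and pinning down its sign on all of $(1,\tfrac32)$ is a short but fussy estimate. I expect the pointwise comparison $p_{k+2}<p_k$ on $(1,\tfrac32)$ together with the boundary signs to sidestep this entirely, so the real content is just (a) confirming every relevant root lies in $(1,\tfrac32)$ rather than $(1,\frac{2k}{1+k})$, which follows from the sign of $(1-\lambda)^{k-1}$ for odd $k$, and (b) the one-line $(k-1)$-th root limit argument above, exactly parallel to Lemmas \ref{lem1} and \ref{lem2}.
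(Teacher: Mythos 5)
Your overall skeleton is the paper's own: solve the defining equation for $k$ as a function of the root (the paper substitutes $t=\lambda-1$, obtains $k=1+\frac{\ln(1-2t)-2\ln(1-t)}{\ln t}$, and shows this function is increasing on $(0,\tfrac12)$), and your $(k-1)$-th root limit argument is literally the paper's. However, the monotonicity half of your write-up contains three sign errors, two of which would reverse the conclusion rather than merely need polishing. First, for odd $k$ and $\lambda>1$ one has $(1-\lambda)^{k-1}>0$ (the exponent $k-1$ is even), not $(1-\lambda)^{k-1}<0$ as you assert; the correct reason a root satisfies $\lambda<\tfrac32$ is that $3-2\lambda=(\lambda-2)^2(\lambda-1)^{k-1}>0$ at a root. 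Second, your $G$ is strictly increasing on $(1,\tfrac32)$, not decreasing: your own endpoint computation ($G\to 0$ as $\lambda\to1^+$ and $G\to+\infty$ as $\lambda\to\tfrac32^-$) already forces this, and the inference you draw --- ``since $G$ is decreasing, $k_1<k_2$ forces $\lambda_{k_1}<\lambda_{k_2}$'' --- is backwards: with $k=1+G(\lambda_k)$, a decreasing $G$ would make the sequence decreasing. Third, in the comparison route you offer as the clean alternative, both boundary signs are flipped: writing $p_k(\lambda)=(\lambda-2)^2(1-\lambda)^{k-1}+2\lambda-3$ as you do, in fact $p_k(1)=-1<0$ and $p_k(\tfrac32)=\tfrac14(\tfrac12)^{k-1}>0$ (again because $k-1$ is even). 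With the signs as you stated them, the pointwise inequality $p_{k+2}<p_k$ on $(1,\tfrac32)$ would locate a root of $p_{k+2}$ in $(1,\lambda_k)$ and hence prove the sequence \emph{decreasing}, contradicting the lemma.

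The comparison idea itself is sound once the signs are corrected: $p_{k+2}(\lambda_k)<p_k(\lambda_k)=0<p_{k+2}(\tfrac32)$ yields a root of $p_{k+2}$ in $(\lambda_k,\tfrac32)$, and since $(\lambda_k,\tfrac32)\subset\bigl(1,\tfrac{2(k+2)}{k+3}\bigr)$, the uniqueness part of Theorem \ref{l1em2.1} (iv) identifies that root as $\lambda_{k+2}$, giving $\lambda_{k+2}>\lambda_k$. Repaired in this way, your argument is a legitimate and arguably less delicate alternative to the paper's proof, which instead establishes monotonicity by analyzing the derivative of $g(t)=\frac{\ln(1-2t)-2\ln(1-t)}{\ln t}$ on $(0,\tfrac12)$; your limit argument needs no change. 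But as written, the monotonicity step would not survive: each of the three sign claims has to be fixed before the conclusion follows.
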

\begin{proof} Let $f(\lambda)=(\lambda-2)^2(1-\lambda)^{k-1}+2\lambda-3$. For fixed $k$, by Theorem \ref{l1em2.3} (iv), $\lambda_k$ is the unique real root of $f(\lambda)=0$ in $(1,\frac{2k}{1+k})$.
Let $t_k=\lambda_k-1$. Then
$1-2t_k=(t_k-1)^2t_k^{k-1}$. Since $k$ is odd, we have $0<t_k<0.5$. Furthermore,
$$k=\frac{\ln{(1-2t_k)}-2\ln{(1-t_k)}}{\ln{t_k}}+1.$$
It is easy to see that $g(t):=\frac{\ln{(1-2t)}-2\ln{(1-t)}}{\ln{t}}$ is a monotonically increasing function in $(0,0.5)$ by analyzing its derivative. Therefore,
$$k_1<k_2 \quad \Leftrightarrow \quad t_{k_1}<t_{k_2} \quad \Leftrightarrow \quad \lambda_{k_1}<\lambda_{k_2}, $$
which implies that $\{\lambda_k\}$ is a strictly increasing sequence. Hence, the limit of $\{\lambda_k\}$  must exist.
Let $\lim\limits_{k\to\infty}\lambda_k=\lambda^*$. Then $1< \lambda^*\le 1.5$ due to $0<t_k<0.5$. If $1< \lambda^*<1.5$, then by computing
$\lim \limits_{k\rightarrow \infty}(\lambda_k-1)=\lim \limits_{k\rightarrow\infty}(\frac{3-2\lambda_k}{(2-\lambda_k)^2})^{\frac{1}{k-1}}$,  we have $\lambda^*-1=1\Leftrightarrow \lambda^*=2$. This is a contradiction, and hence $\lambda^*=1.5$. We complete the proof.
\end{proof}

Using the same techniques as the proof of Lemmas \ref{lem1}, \ref{lem2} and \ref{lem7}, we can easily obtain the other lemmas.
\begin{Lemma}\label{lem3}{\rm
Let $k$ be odd and $\lambda_k$ be the real root of $(\lambda-2)^2(1-\lambda)^{k-2}-1=0$ in $(0,1)$. Then $\{\lambda_k\}$ is a strictly decreasing sequence and $\lim\limits_{k\to\infty}\lambda_k=0$.}
\end{Lemma}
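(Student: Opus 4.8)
The plan is to mirror the proof of Lemma \ref{lem1}. First I would set $f(\lambda)=(\lambda-2)^2(1-\lambda)^{k-2}-1$, so that by Theorem \ref{l1em2.1}(iii), for each odd $k\ge 3$ the number $\lambda_k$ is the \emph{unique} root of $f(\lambda)=0$ in $(0,1)$ and hence is well defined. Then I would substitute $t_k:=1-\lambda_k\in(0,1)$; since $\lambda_k-2=-(1+t_k)$, the equation $f(\lambda_k)=0$ becomes $(1+t_k)^2 t_k^{\,k-2}=1$. Taking logarithms (legitimate because $t_k,\,1+t_k>0$ and $\ln t_k\neq 0$) gives the explicit relation
$$k=\frac{-2\ln(1+t_k)}{\ln t_k}+2=:g(t_k)+2.$$

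Next I would observe that $g(t)=\dfrac{-2\ln(1+t)}{\ln t}$ is exactly twice the function whose strict monotonicity on $(0,1)$ was used in Lemma \ref{lem1}, so $g$ is strictly increasing on $(0,1)$ by the very same derivative analysis. Consequently, for $k_1<k_2$ we get $g(t_{k_1})=k_1-2<k_2-2=g(t_{k_2})$, hence $t_{k_1}<t_{k_2}$, hence $\lambda_{k_1}=1-t_{k_1}>1-t_{k_2}=\lambda_{k_2}$; that is, $\{\lambda_k\}$ is strictly decreasing. Being bounded below by $0$, it has a limit $\lambda^*\in[0,1]$.

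Finally, to identify $\lambda^*$, I would rewrite the defining equation as $1-\lambda_k=\bigl((2-\lambda_k)^{-2}\bigr)^{1/(k-2)}$. For $\lambda_k\in(0,1)$ we have $(2-\lambda_k)^2\in(1,4)$, so the base $(2-\lambda_k)^{-2}$ stays in a fixed compact subset of $(0,\infty)$ and its $(k-2)$-th root tends to $1$ as $k\to\infty$. Therefore $1-\lambda^*=1$, i.e. $\lambda^*=0$, completing the proof. The only step that is not purely routine is the monotonicity of $g$ on $(0,1)$ (the sign analysis of $g'$), but this is a scalar multiple of the computation already performed for Lemma \ref{lem1}, so no genuinely new difficulty arises.
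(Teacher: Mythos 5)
Your proposal is correct and follows essentially the same route as the paper, which proves Lemma \ref{lem3} simply by invoking the technique of Lemma \ref{lem1} (substitute $t_k=1-\lambda_k$, express $k$ as a monotone function of $t_k$, then identify the limit from the root form of the defining equation). Your write-up just makes that routine adaptation explicit, with the correct exponent $k-2$ and the doubled logarithmic function.
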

\begin{Lemma}\label{lem5}{\rm
Let $k$ be odd and $\lambda_k$ be the real root of $(\lambda-2)^2(1-\lambda)^{k-1}+2\lambda-3=0$ in $(0,1)$. Then $\{\lambda_k\}$ is a strictly decreasing sequence and $\lim\limits_{k\to\infty}\lambda_k=0$.}
\end{Lemma}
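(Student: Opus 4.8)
The plan is to reuse the reparametrization trick that proves Lemmas \ref{lem1}, \ref{lem2} and \ref{lem7}. Fix an odd $k\ge 3$ and set $f(\lambda)=(\lambda-2)^2(1-\lambda)^{k-1}+2\lambda-3$. By Theorem \ref{l1em2.1}~(iv), $\lambda_k$ is the \emph{unique} root of $f$ in $(0,1)$, so it is a genuine function of $k$. Substituting $t_k:=1-\lambda_k\in(0,1)$ and using $\lambda_k-2=-(1+t_k)$ and $3-2\lambda_k=1+2t_k$, the equation $f(\lambda_k)=0$ becomes $(1+t_k)^2 t_k^{k-1}=1+2t_k$, i.e.
\[
 k \;=\; 1+\frac{\ln(1+2t_k)-2\ln(1+t_k)}{\ln t_k}.
\]

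Next I would study $g(t):=\dfrac{\ln(1+2t)-2\ln(1+t)}{\ln t}=\dfrac{\ln\big((1+2t)/(1+t)^2\big)}{\ln t}$ on $(0,1)$. Since $(1+t)^2>1+2t$ the numerator is negative, and $\ln t<0$ on $(0,1)$, so $g(t)>0$ there. Differentiating $g(t)\ln t=\ln(1+2t)-2\ln(1+t)$ gives $g'(t)\ln t+g(t)/t=-\,2t/\big((1+t)(1+2t)\big)$, hence $g'(t)=\dfrac{1}{\ln t}\Big(-\dfrac{2t}{(1+t)(1+2t)}-\dfrac{g(t)}{t}\Big)$; the bracket is negative (both summands are) and $\ln t<0$, so $g'(t)>0$ on $(0,1)$. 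Consequently $k=1+g(t_k)$ forces $k_1<k_2\Leftrightarrow t_{k_1}<t_{k_2}\Leftrightarrow \lambda_{k_1}>\lambda_{k_2}$, so $\{\lambda_k\}$ is strictly decreasing, and being bounded below by $0$ it converges to some $\lambda^*\in[0,1)$.

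Finally, to identify the limit I would rewrite $f(\lambda_k)=0$ as $(1-\lambda_k)^{k-1}=(3-2\lambda_k)/(\lambda_k-2)^2$, so $1-\lambda_k=\big((3-2\lambda_k)/(\lambda_k-2)^2\big)^{1/(k-1)}$. For $\lambda^*\in[0,1)$ the base converges to the finite positive number $(3-2\lambda^*)/(\lambda^*-2)^2$, hence its $(k-1)$-th root tends to $1$; letting $k\to\infty$ yields $1-\lambda^*=1$, i.e. $\lambda^*=0$. The only step that is not purely formal is the monotonicity of $g$, but as the derivative computation above shows, positivity of $g$ makes the sign of $g'$ immediate, so even that is short; the rest is completely parallel to Lemmas \ref{lem1}, \ref{lem2} and \ref{lem7}.
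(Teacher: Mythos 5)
Your proposal is correct and follows exactly the route the paper intends: the paper proves Lemma \ref{lem5} only by the remark that the techniques of Lemmas \ref{lem1}, \ref{lem2} and \ref{lem7} carry over, and your substitution $t_k=1-\lambda_k$, the identity $k=1+g(t_k)$ with $g(t)=\frac{\ln(1+2t)-2\ln(1+t)}{\ln t}$, the sign argument for $g'$, and the root-extraction limit step are precisely that technique carried out correctly (and in fact more explicitly than the paper does).
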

\begin{Lemma}\label{lem4}{\rm
Let $k$ be even and $\lambda_k$ be the real root of  $(\lambda-2)^2(\lambda-1)^{k-2}-1=0$ in $(2,3)$. Then $\{\lambda_k\}$ is a strictly decreasing sequence and $\lim\limits_{k\to\infty}\lambda_k=2$.}
\end{Lemma}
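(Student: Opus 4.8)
The plan is to imitate the proofs of Lemmas \ref{lem1}, \ref{lem2} and \ref{lem7} almost verbatim, since the defining equation has exactly the same algebraic shape. First I would set $f(\lambda)=(\lambda-2)^2(\lambda-1)^{k-2}-1$; by Theorem \ref{l1em2.3} (iii), for each even $k$ the number $\lambda_k$ is the unique root of $f(\lambda)=0$ in $(2,3)$, so the family $\{\lambda_k\}$ is well defined. I would then perform the substitution $t_k=\lambda_k-1$, which places $t_k$ in $(1,2)$ and rewrites $\lambda_k-2=t_k-1\in(0,1)$. The equation $(\lambda_k-2)^2(\lambda_k-1)^{k-2}=1$ becomes $(t_k-1)^2 t_k^{k-2}=1$; taking logarithms and solving for $k$ yields the clean relation
\[ f(\lambda_k)=0 \quad \Leftrightarrow \quad k=-\frac{2\ln(t_k-1)}{\ln t_k}+2. \]
Thus the whole problem reduces to understanding the auxiliary function $g(t):=-\frac{2\ln(t-1)}{\ln t}$ on $(1,2)$.

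The key step is to show that $g$ is strictly monotone on $(1,2)$. I would do this by differentiating: a direct computation gives
\[ g'(t)=-2\,\frac{\dfrac{\ln t}{t-1}-\dfrac{\ln(t-1)}{t}}{(\ln t)^2}. \]
On $(1,2)$ one has $\ln t>0$ and $t-1>0$, so the first term in the numerator is positive, while $t-1<1$ forces $\ln(t-1)<0$, so the second term $-\ln(t-1)/t$ is also positive. Hence the bracketed numerator is strictly positive and $g'(t)<0$, i.e. $g$ is strictly decreasing on $(1,2)$. Because $k=g(t_k)+2$, a strictly decreasing $g$ gives
\[ k_1<k_2 \quad \Leftrightarrow \quad t_{k_1}>t_{k_2} \quad \Leftrightarrow \quad \lambda_{k_1}>\lambda_{k_2}, \]
which is exactly the assertion that $\{\lambda_k\}$ is strictly decreasing. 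Since $\lambda_k\in(2,3)$, the sequence is bounded below by $2$, so the limit $\lambda^*:=\lim_{k\to\infty}\lambda_k$ exists and satisfies $2\le\lambda^*\le 3$.

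For the limit I would argue exactly as in Lemma \ref{lem2}. Rewriting the defining equation as $\lambda_k-1=\big(\tfrac{1}{(\lambda_k-2)^2}\big)^{1/(k-2)}$ and passing to the limit, I would suppose for contradiction that $\lambda^*\neq 2$. Then $\lambda_k-2\to\lambda^*-2>0$ stays bounded away from $0$, so the exponent $\frac{-2\ln(\lambda_k-2)}{k-2}\to 0$ and the right-hand side tends to $1$; this forces $\lambda^*-1=1$, i.e. $\lambda^*=2$, contradicting $\lambda^*\neq 2$. Hence $\lambda^*=2$. I do not expect any real obstacle here: the only point requiring care is the sign analysis of $g'$, but on $(1,2)$ both summands in its numerator happen to carry the same sign, so the monotonicity of $g$ — and therefore of $\{\lambda_k\}$ — follows immediately, and the rest is identical to the lemmas already proved.
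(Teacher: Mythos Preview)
Your proposal is correct and follows exactly the template the paper intends: the paper does not spell out a separate proof of this lemma but explicitly says that the same techniques as in Lemmas \ref{lem1}, \ref{lem2} and \ref{lem7} apply, and your substitution $t_k=\lambda_k-1$, the resulting relation $k=-\dfrac{2\ln(t_k-1)}{\ln t_k}+2$, the sign analysis of $g'$ on $(1,2)$, and the contradiction argument for the limit are precisely the instantiation of those techniques for this equation.
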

\begin{Lemma}\label{lem6}{\rm
Let $k$ be even and $\lambda_k$ be the real root of $(\lambda-2)^2(\lambda-1)^{k-1}-2\lambda+3=0$ in $(2,3)$. Then $\{\lambda_k\}$ is a strictly decreasing sequence and $\lim\limits_{k\to\infty}\lambda_k=2$.}
\end{Lemma}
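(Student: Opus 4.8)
The plan is to follow the blueprint of Lemmas \ref{lem2} and \ref{lem7}: turn the defining equation into an explicit formula for $k$ as a function of a single shifted variable, deduce strict monotonicity in $k$ from the monotonicity of that function, and then identify the limit by taking $(k-1)$-th roots.

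First I would substitute $t=\lambda-1$, so that $\lambda\in(2,3)$ corresponds to $t\in(1,2)$, and the equation $(\lambda-2)^2(\lambda-1)^{k-1}-2\lambda+3=0$ becomes $(t-1)^2t^{k-1}=2t-1$. On $(1,2)$ the three factors $(t-1)^2$, $t^{k-1}$, $2t-1$ are all positive, so taking logarithms is legitimate; writing $t_k$ for the root attached to $k$ (which exists and is unique by Theorem \ref{l1em2.3}(iv)) and using $\ln t_k>0$, we obtain
$$k=\frac{\ln(2t_k-1)-2\ln(t_k-1)}{\ln t_k}+1=:g(t_k)+1.$$
Next I would show $g$ is strictly decreasing on $(1,2)$. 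With $g(t)=N(t)/\ln t$ and $N(t)=\ln(2t-1)-2\ln(t-1)$, the sign of $g'(t)$ equals the sign of $tN'(t)\ln t-N(t)$. Here $N'(t)=\frac{2}{2t-1}-\frac{2}{t-1}<0$ on $(1,2)$ because $2t-1>t-1>0$, and $\ln t>0$, so $tN'(t)\ln t<0$; and $N(t)=\ln\frac{2t-1}{(t-1)^2}>0$ on $(1,2)$ because $2t-1>(t-1)^2\Leftrightarrow t^2-4t+2<0\Leftrightarrow t\in(2-\sqrt2,\,2+\sqrt2)\supset(1,2)$. Hence $tN'(t)\ln t-N(t)<0$, so $g'<0$. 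Therefore $k_1<k_2\Leftrightarrow t_{k_1}>t_{k_2}\Leftrightarrow\lambda_{k_1}>\lambda_{k_2}$, which gives the claimed strict monotonicity of $\{\lambda_k\}$.

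Since $\{\lambda_k\}$ is decreasing and bounded below by $2$, the limit $\lambda^*:=\lim_k\lambda_k$ exists with $\lambda^*\ge 2$; set $t^*=\lambda^*-1\ge 1$. Taking $(k-1)$-th roots in $(t_k-1)^2t_k^{k-1}=2t_k-1$ gives $t_k\,(t_k-1)^{2/(k-1)}=(2t_k-1)^{1/(k-1)}$. If $t^*>1$, then $t_k-1\to t^*-1>0$ is bounded away from $0$, so $(t_k-1)^{2/(k-1)}\to 1$, and likewise $2t_k-1\to 2t^*-1\in(1,3)$ forces $(2t_k-1)^{1/(k-1)}\to 1$; letting $k\to\infty$ would yield $t^*=1$, a contradiction. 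Hence $t^*=1$, i.e. $\lambda^*=2$.

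The only step with any content is the monotonicity of $g$, and it reduces to the two elementary inequalities $N'(t)<0$ and $N(t)>0$ on $(1,2)$; everything else is bookkeeping identical to Lemmas \ref{lem2} and \ref{lem7}. I do not expect a genuine obstacle — this lemma is precisely the ``even, interval $(2,3)$'' companion of Lemma \ref{lem7}, with $\ln t_k>0$ rather than $<0$ making the signs line up cleanly.
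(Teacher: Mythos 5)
Your proposal is correct and is exactly the technique the paper invokes for this lemma (it defers to the proofs of Lemmas \ref{lem2} and \ref{lem7}): substitute $t=\lambda-1$, solve for $k$ as $k=\frac{\ln(2t_k-1)-2\ln(t_k-1)}{\ln t_k}+1$, deduce monotonicity from the monotonicity of this function of $t$ on $(1,2)$, and identify the limit by the $(k-1)$-th root argument. Your sign analysis of $g'$ and the squeeze argument for the limit are both sound, so nothing further is needed.
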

\begin{Lemma}\label{lem8}{\rm
Let $k$ be even and $\lambda_k$ be the real root of $(\lambda-2)^2(\lambda-1)^{k-1}-2\lambda+3=0$ in $(1,\frac{2k}{1+k})$.  Then $\{\lambda_k\}$ is a strictly decreasing sequence and $\lim\limits_{k\to\infty}\lambda_k=1.5$.}
\end{Lemma}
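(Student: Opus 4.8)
The plan is to copy the argument behind Lemma~\ref{lem7}, with the parity of $k$ reversed. First I would fix an even $k\ge 4$ and invoke Theorem~\ref{l1em2.3}(iv), which says that $\lambda_k$ is the \emph{unique} root of $f(\lambda)=(\lambda-2)^2(\lambda-1)^{k-1}-2\lambda+3$ in $(1,\tfrac{2k}{1+k})$. Writing $t_k=\lambda_k-1$, the equation $f(\lambda_k)=0$ becomes
\begin{equation*}
(t_k-1)^2\,t_k^{\,k-1}=2t_k-1 .
\end{equation*}
Since $k$ is even, $t_k^{\,k-1}>0$ and $(t_k-1)^2\ge 0$, so the left-hand side is nonnegative; this forces $t_k>\tfrac12$ (equality is impossible, since then the left side would be strictly positive while the right side vanishes), and clearly $t_k<\tfrac{k-1}{k+1}<1$. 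Taking logarithms, $t_k$ is pinned down by
\begin{equation*}
k=g(t_k)+1,\qquad g(t)=\frac{\ln(2t-1)-2\ln(1-t)}{\ln t},\quad t\in\big(\tfrac12,1\big).
\end{equation*}

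The second step is to show, by inspecting $g'$, that $g$ is strictly decreasing on $(\tfrac12,1)$; note that $g(t)\to+\infty$ as $t\downarrow\tfrac12$, so ``decreasing'' is the correct direction. This is exactly the even-$k$ counterpart of the monotonicity computation used for Lemma~\ref{lem7}. Granting it, every $t_k$ lies in $(\tfrac12,1)$ with $g(t_k)=k-1$, so larger $k$ yields smaller $t_k$, whence $\lambda_k=1+t_k$ is strictly decreasing. Being decreasing and bounded below by $\tfrac32$ (because $t_k>\tfrac12$), the sequence converges; set $\lambda^*:=\lim_k\lambda_k$ and $t^*:=\lambda^*-1\ge\tfrac12$. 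From the monotonicity just established, $\lambda_k\le\lambda_4<\tfrac85$, hence $t^*\le t_4<\tfrac35$, so $t^*$ stays bounded away from $1$.

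To evaluate the limit I would recast the scalar equation as $t_k^{\,k-1}=\dfrac{2t_k-1}{(1-t_k)^2}$ and let $k\to\infty$: the left side tends to $0$ because $0<t_k\le t_4<1$, whereas the right side tends to the finite number $\dfrac{2t^*-1}{(1-t^*)^2}$. Equating the two limits forces $2t^*-1=0$, i.e.\ $\lambda^*=\tfrac32$. (Equivalently: from $t_k=\big(\tfrac{2t_k-1}{(1-t_k)^2}\big)^{1/(k-1)}$, if $t^*>\tfrac12$ then the right-hand side would converge to $1\ne t^*$, a contradiction.)

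The only step that is not mere bookkeeping is the monotonicity of $g$: proving $g'<0$ on all of $(\tfrac12,1)$ comes down to controlling the sign of
\begin{equation*}
\Big(\tfrac{2}{2t-1}+\tfrac{2}{1-t}\Big)\ln t-\tfrac{1}{t}\big(\ln(2t-1)-2\ln(1-t)\big),
\end{equation*}
whose two summands pull in opposite directions on the subinterval $\big(\tfrac12,\,2-\sqrt{2}\big)$, where $\ln(2t-1)-2\ln(1-t)<0$. I expect this sign analysis to be the main obstacle, but it is routine and is essentially the same one already carried out in the proof of Lemma~\ref{lem7}; everything after it is straightforward limit-taking.
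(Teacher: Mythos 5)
Your proposal is correct and takes essentially the same route as the paper, which proves Lemma~\ref{lem8} only by remarking that the techniques of Lemma~\ref{lem7} carry over: the substitution $t=\lambda-1$, rewriting the equation as $k=g(t)+1$ with $g(t)=\frac{\ln(2t-1)-2\ln(1-t)}{\ln t}$, deducing monotonicity of $t_k$ from monotonicity of $g$, and identifying the limit from $t_k^{\,k-1}=\frac{2t_k-1}{(1-t_k)^2}$, with your parity-adjusted details ($t_k>\tfrac12$, $g$ decreasing, $\lambda_k\downarrow 1.5$) being exactly the right ones. The one step you leave unchecked, the sign of $g'$ on $\bigl(\tfrac12,\,2-\sqrt{2}\bigr)$, is likewise left unverified in the paper (``by analyzing its derivative''), so your write-up is at least as complete as the paper's own argument.
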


Combining the conclusions in Lemmas \ref{lem1}-\ref{lem8} and  Theorems \ref{l1em2.1} and \ref{l1em2.3}, we see that we can not prove Theorem \ref{mythm}. So, we give the following conjecture.
\begin{Conjecture}\label{lem9}{\rm
Theorem \ref{mythm} is true for the $k$-uniform loose path of length $3$.}
\end{Conjecture}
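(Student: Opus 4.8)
The plan is to prove Theorem \ref{mythm} (equivalently, Conjecture \ref{lem9}) by analysing, branch by branch, every ``eigenvalue curve'' $k\mapsto\lambda_k$ produced by Theorems \ref{l1em2.1} and \ref{l1em2.3}. Three of these curves are constant ($\lambda\equiv 0$, $\lambda\equiv 1$, $\lambda\equiv 2$) and eight more are already handled by Lemmas \ref{lem1}--\ref{lem8}. What remains is: for $k$ odd, the root of case (v) of Theorem \ref{l1em2.1} in $(0,1)$; and for $k$ even, the roots of cases (ii) and (iii) in $(0,1)$, the root of case (iv) in $(0,1)$, the root of case (v) in $(2,3)$, both roots of case (vi), and the root of case (vii) in $(2,3)$. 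For each remaining branch I would show that $\{\lambda_k\}$ is eventually strictly monotone in $k$, hence convergent, and then identify its limit as an element of $\{0,1,1.5,2\}$. Since, by Theorems \ref{l1em2.1} and \ref{l1em2.3}, every H-eigenvalue of $\mathcal{L}_k$ lies on one of these finitely many branches, a pigeonhole over branches (inside each parity class) then shows that every accumulation point of the spectra --- in particular the limit of any convergent sequence of H-eigenvalues $\lambda_k$ --- is one of the branch limits and so lies in $\{0,1,1.5,2\}$, which is Theorem \ref{mythm}.

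The engine is the substitution already used in Lemmas \ref{lem1}, \ref{lem2} and \ref{lem7}. Given the defining equation $P(\lambda,k)=0$, put $t=\lambda-1$ or $t=1-\lambda$ so that the only $k$-dependence sits in a single power $u=t^{k-1}$ (or $t^{k-2}$, or, after extracting a square root, $t^{k/2}$), solve the resulting polynomial in $u$ for $u=\phi(t)$, and read off $k-1=\ln\phi(t_k)/\ln t_k$. Strict monotonicity of $g(t):=\ln\phi(t)/\ln t$ on the interval of interest forces $\{t_k\}$, hence $\{\lambda_k\}$, to be strictly monotone and therefore convergent; and passing to the limit in $t_k=\phi(t_k)^{1/(k-1)}$ --- legitimate exactly when $\phi(t_k)$ stays in a compact subinterval of $(0,\infty)$ --- pins $t^*:=\lim t_k$ to a value making $\lambda^*$ equal to $0$, $1.5$ or $2$, with the interval the root occupies selecting which. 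For the branches where $P$ is linear in $u$ --- the $(0,1)$ roots of cases (ii), (iii), (iv) for $k$ even and the $(2,3)$ root of case (v) for $k$ even --- this goes through with only routine bookkeeping; indeed several of them reduce, after the substitution, to the very functions $g$ already analysed in Lemmas \ref{lem1}--\ref{lem6}. For instance the $(0,1)$ root of case (iv) for $k$ even satisfies $(1-\lambda_k)^{k-1}=\frac{3-2\lambda_k}{(2-\lambda_k)^2}$, whose right side tends to $\frac{3}{4}$, so $(k-1)\lambda_k\to\ln\frac{4}{3}$ and $\lambda_k\to 0$; and the $(2,3)$ root of case (v) for $k$ even satisfies $(\lambda_k-1)^{k-1}=\frac{1}{(\lambda_k-2)^2}$, whose left side exceeds $1$, forcing $(\lambda_k-2)^2\to 0$ and hence $\lambda_k\to 2$.

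The genuine obstacle is the ``squared'' equations --- case (v) for $k$ odd and cases (vi), (vii) for $k$ even --- typified by $[(\lambda-2)(\lambda-1)^{k-1}-1]^2-(\lambda-1)^k=0$. Extracting a square root and setting $w=(\lambda-1)^{k/2}$ turns this into a quadratic in $w$, namely $\frac{\lambda-2}{\lambda-1}w^2-w-1=0$ (with analogues for the other two), so $w=\phi_\pm(\lambda)$ now involves the square root of a rational function of $\lambda$, and one is left to prove monotonicity of $\psi_\pm(\lambda):=\ln\phi_\pm(\lambda)/\ln(\lambda-1)$ on $(2,3)$ (or the relevant subinterval). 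Two points must be settled and both are delicate: (a) that the eigenvalue stays on a single branch $\phi_+$ or $\phi_-$ for all large $k$, and that the corresponding $\psi_\pm$ is strictly monotone there --- its derivative has no obvious sign because of the nested square root and logarithm; and (b) that $\phi_\pm(\lambda_k)$ does not drift to $0$ or $\infty$ at a rate comparable to $k$, which would break the ``take the $(k-1)$-th root'' step. I expect (a) to be where the argument has to do real work, and this is presumably why the authors stopped at a conjecture. A monotonicity-free fallback is a compactness argument: every subsequence of $\{\lambda_k\}$ has a sub-subsequence converging to some $\mu\in[0,3]$, and substituting into the isolated-power relation and taking roots forces $\mu\in\{0,1,1.5,2\}$; the remaining task is then to exclude two distinct subsequential limits, for which one would want a uniform-in-$k$ bound on the number of real roots of each equation inside each of $(0,1)$, $(1,\frac{2k}{k+1})$, $(2,3)$, together with a continuity/tracking argument --- and turning that into a rigorous proof is itself the crux.
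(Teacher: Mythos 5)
There is no paper proof to compare against here: the authors explicitly state, after Lemmas \ref{lem1}--\ref{lem8}, that they cannot prove Theorem \ref{mythm} and therefore record it as Conjecture \ref{lem9}. So the only question is whether your proposal closes that gap, and as written it does not. Your branch-by-branch scheme disposes of the constant branches $\lambda\equiv 0,1,2$, reuses Lemmas \ref{lem1}--\ref{lem8}, and plausibly handles the remaining branches that are linear in the isolated power $t^{k-1}$ (cases (ii), (iii), (iv) in $(0,1)$ and (v) in $(2,3)$ of Theorem \ref{l1em2.3}). But for the decisive ``squared'' branches --- case (v) of Theorem \ref{l1em2.1} and cases (vi)--(vii) of Theorem \ref{l1em2.3}, which are exactly the equations not covered by the paper's lemmas --- you only reduce to a quadratic in $w=(\lambda-1)^{k/2}$ and then concede that the monotonicity of $\psi_\pm$ and the branch-tracking (your points (a) and (b)) are unresolved. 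Since those cases are the whole difficulty, the proposal is a programme with an admitted hole at its crux, not a proof.

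That said, your ``compactness fallback'' is stronger than you credit, and monotonicity is not needed at all once Theorem \ref{mythm} is read in the only tenable way, namely as a statement about accumulation points of the spectra (a literal eigenvalue selection need not converge: take $\lambda_k=0$ for even $k$ and $\lambda_k=2$ for odd $k$). Given an accumulation point $\mu$, pass to a subsequence lying on one fixed branch (finitely many branches per parity); then the squared equations determine $\mu$ with no tracking. For instance, on the $(2,3)$ branch of case (vii), rewrite $[(\lambda-2)(\lambda-1)^{k-1}+1]^2=(\lambda-1)^k$ as $(\lambda-2)(\lambda-1)^{k/2-1}=\pm 1-(\lambda-1)^{-k/2}$: if $\lambda_{k_j}\to\mu>2$ the right-hand side stays bounded while the left-hand side diverges, so $\mu=2$; the $(2,3)$ root of (vi) is identical. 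On the $(0,1)$ branches of (v) (odd $k$) and (vi) (even $k$), if $\lambda_{k_j}\to\mu\in(0,1]$ then $(\lambda-2)(\lambda-1)^{k-1}\pm1\to\pm1$ while $(\pm(\lambda-1))^k\to 0$, a contradiction, so $\mu=0$. Writing out these short estimates, and restating the claim as ``every accumulation point of the Laplacian H-spectra of $G_{k,3}$ lies in $\{0,1,1.5,2\}$, and each of these values is attained in the limit (the latter from $0,1,2$ being eigenvalues for every $k$ and from Lemmas \ref{lem7}--\ref{lem8}),'' would turn your outline into a complete argument; the step you flag as the remaining crux --- excluding two distinct subsequential limits --- is simply not required for that formulation.
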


\section{Some concluding remarks}
Motivated by the applications of Laplacian H-spectrum in edge cut and edge connectivity of a $k$-uniform hypergraph and in image processing, we studied Laplacian H-spectrum of the $k$-uniform loose path of length three. By Theorems \ref{l1em2.1} and \ref{l1em2.3}, we found out all Laplacian H-eigenvalues of the loose path of length three. We showed that the number of Laplacian H-eigenvalues of the odd-uniform loose path of length three is $7$, and the number of Laplacian H-eigenvalues of the even-uniform loose path of length three is $14$. Some numerical results are given to show the efficiency of our method.  Especially, the numerical results show that its Laplacian H-spectrum  converges to $\{0,1,1.5,2\}$ when $k$ goes to infinity. Finally, we established convergence analysis for a part of the conclusion. However, we can not prove the whole conclusion. We presented a conjecture for the future research.

\end{document}